\newtheorem{theorem}{Theorem}[section]
\newtheorem{corollary}[theorem]{Corollary}
\newtheorem{lemma}[theorem]{Lemma}
\newtheorem{proposition}[theorem]{Proposition}
\theoremstyle{definition}
\newtheorem{definition}[theorem]{Definition}
\newtheorem{example}[theorem]{Example}
\newtheorem{remark}[theorem]{Remark}
\newtheorem{question}[theorem]{Question}
\newtheorem*{notation*}{Notation}
\newtheorem*{acks*}{Acknowledgements}
\renewcommand\leq{\leqslant}
\renewcommand\geq{\geqslant}
\newcommand{\Aut}{\operatorname{Aut}}
\newcommand{\Out}{\operatorname{Out}}
\newcommand{\Inn}{\operatorname{Inn}}
\newcommand{\ad}{\operatorname{ad}}
\newcommand{\cl}{\operatorname{cl}}
\newcommand{\normal}[1]{\left<\! \left< #1\right> \!\right>}
\newcommand{\Gn}{\mathcal{G}_n}
\begin{document}

\title{Infinite simple characteristic quotients}
\author{R{\'e}mi Coulon and Francesco Fournier-Facio}
\date{\today}
\maketitle

\begin{abstract}
We construct continuum many infinite, simple, characteristic quotients of non-abelian free groups, answering a 1978 question of James Wiegold. The method is very flexible, allowing to impose certain properties on the quotients, to generalize the construction to large classes of groups with hyperbolic features, and to produce quasi-isometrically diverse examples.
\end{abstract}

\section{Introduction}

A subgroup of a group $\Gamma$ is called \emph{characteristic} if it is invariant under all automorphisms of $\Gamma$, and by extension the corresponding quotients are called \emph{characteristic}.
Wiegold asked if non-abelian free groups admit infinite simple characteristic quotients  \cite[Question 6.45]{kourovka}. 
The main goal of this paper is to prove the following theorem, answering positively Wiegold's question.

\begin{theorem}[Corollary \ref{cor:answer}]
\label{intro:thm:answer}
    For all $n \geq 2$, the free group of rank $n$ admits continuum many, pairwise non-isomorphic, $2$-generated, infinite, simple, characteristic quotients.
\end{theorem}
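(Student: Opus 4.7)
My plan is to construct each simple characteristic quotient as a direct limit $G_\infty = \varinjlim G_i$ of non-elementary acylindrically hyperbolic quotients of $F_n$, with $N_i := \ker(F_n \twoheadrightarrow G_i)$ characteristic at every finite stage. Starting from $G_0 = F_n$, which is non-elementary hyperbolic and carries the tautological $\Aut(F_n)$-action, I would iterate as follows: given $G_i$ with characteristic kernel (so that $\Aut(F_n)$ still acts on $G_i$ by automorphisms), pick a non-trivial element $w_{i+1} \in G_i$ coming from a fixed enumeration of $F_n$, and set
\[
G_{i+1} \;=\; G_i \,\big/\, \normal{\{\phi(w_{i+1})^{n_{i+1}} : \phi \in \Aut(F_n)\}},
\]
where the exponent $n_{i+1}$ is chosen so large that the resulting $\Aut(F_n)$-invariant collection of cyclic subgroups satisfies the very rotating family condition of Dahmani--Guirardel--Osin inside $G_i$. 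The geometric small cancellation theorem, in the form refined by Coulon for the acylindrical setting, then guarantees that $G_{i+1}$ is again non-elementary acylindrically hyperbolic, while the kernel remains characteristic by construction.

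To force simplicity of the limit I would interleave the above ``killing'' steps with ``identification'' steps: for a paired non-trivial element $a$ and target element $b$ from a diagonal enumeration of $F_n \times F_n$, impose $\Aut(F_n)$-equivariant relators forcing $b \in \normal{a}$ in $G_{i+1}$, using the same small cancellation machinery applied to suitable commutator-type words. The diagonal argument ensures that in the limit $G_\infty$ every non-trivial element normally generates the whole group, hence $G_\infty$ is simple. The kernel $N_\infty = \bigcup_i N_i$ is characteristic as a nested union of characteristic subgroups. The delicate point is ensuring that $G_\infty$ is not trivial, which is extracted from the uniform geometric control (injectivity radii and translation length lower bounds for a designated family of loxodromic elements) that the small cancellation framework maintains through each Dehn filling and hence through the direct limit.

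For continuum many pairwise non-isomorphic quotients, I would introduce a binary choice at each killing stage, for instance replacing $n_{i+1}$ by one of two distinct large primes $p_i, q_i$. Each infinite binary sequence $\varepsilon \in \{0,1\}^{\mathbb{N}}$ yields a simple characteristic quotient $G_\infty^\varepsilon$, and the set of finite orders appearing in $G_\infty^\varepsilon$, an isomorphism invariant, is controlled by $\varepsilon$. A pigeonhole argument over the uncountably many sequences $\varepsilon$ then separates $2^{\aleph_0}$ isomorphism classes.

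The main technical obstacle is the equivariant small cancellation step. Unlike the classical setting, the relator family $\{\phi(w)^n : \phi \in \Aut(F_n)\}$ is infinite and contains arbitrarily long elements, because the word length of $\phi(w)$ grows without bound as $\phi$ ranges over $\Aut(F_n)$, so finite small cancellation arguments do not directly apply. One must establish that the translation lengths of this family stay uniformly large in $G_i$, and that distinct $\Aut(F_n)$-translates of the corresponding axes are pairwise well-separated in the Cayley graph of $G_i$. Carrying this out rigorously, stage after stage, while simultaneously running the diagonal identification procedure and the branching that produces continuum many outcomes, is where I expect the bulk of the work to sit, and is precisely the task at which Coulon's machinery for periodic quotients of (acylindrically) hyperbolic groups should be aimed.
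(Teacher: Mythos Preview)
Your approach is genuinely different from the paper's. Rather than running $\Aut(F_n)$-equivariant small cancellation on $F_n$, the paper works inside the larger group $G = \Aut(F_n)$, which is acylindrically hyperbolic by Genevois--Horbez, with $H = \Inn(F_n) \cong F_n$ as an infinite normal subgroup. Ordinary \emph{finitary} small cancellation (Hull's theorem) is then applied in $G$: at stage $k$ the normal closure in $H_k$ of the next enumerated element is a suitable subgroup of $G_k$, and one passes to a quotient $G_{k+1}$ by adding only finitely many relators. Characteristicity of the limiting kernel in $F_n$ is automatic, since any kernel of a homomorphism out of $\Aut(F_n)$ pulls back along $\ad\colon F_n \to \Aut(F_n)$ to a characteristic subgroup (Lemma~\ref{lem:char}). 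For continuum many isomorphism types the paper does not branch on torsion exponents; instead it first embeds an arbitrary countable group $L$ into the quotient via a free-product step (Proposition~\ref{res: quotient with prescribed torsion}) and then lets $L$ range over a continuum of pairwise non-isomorphic finitely generated groups.

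Your route has the merit of not invoking acylindrical hyperbolicity of $\Aut(F_n)$, but the obstacle you identify is genuine and is precisely what the paper's device circumvents. For the family $\{\phi(w)^n : \phi \in \Aut(F_n)\}$ in $F_n$, no single exponent $n$ yields a uniform small cancellation condition: already for $w=a$ in $F_2$, the primitive elements $a$ and $a^{N}b$ have axes overlapping on a segment of length $N$, so the piece-to-relator ratio against $a^n$ is at least $N/n$, which is unbounded in $N$. Making your scheme rigorous would therefore require a graded or Burnside-type iteration rather than a one-shot Dehn filling, together with an argument that the acylindrical structure at each stage can be chosen $\Aut(F_n)$-equivariantly (note that $\Aut(F_n)$ does not act isometrically on the standard Cayley graph of $F_n$). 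By contrast, once one works in $\Aut(F_n)$, the demand for $\Aut(F_n)$-invariance becomes ordinary normality, and finitely many relators suffice at every stage.
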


We will state stronger versions later in the introduction (see Theorems \ref{intro:thm:answer:strong} and \ref{intro:thm:QI}).

\medskip

Wiegold's question on the (non-)existence of such a group falls in the context of \emph{growth sequences} of simple groups. For a finitely generated group $\Gamma$, we denote by $d(\Gamma)$ its rank, that is the smallest $n$ such that $\Gamma$ is a quotient of the free group of rank $n$, denoted by $F_n$. The growth sequence of $\Gamma$ is the sequence $\{ d(\Gamma^p) \}_{p \geq 1}$.
Growth sequences of finite groups were studied by Wiegold and co-authors, who gave very precise asymptotic formulas: see  \cite{w:finite1, w:finite6} and the references therein.

The case of infinite groups is more mysterious, and was also studied by Wiegold and several other authors: see \cite{obratsov, erfanian} and the references therein, as well as  \cite{wise, bridson} for approaches of a different flavor. In particular, Wiegold and Wilson proved that if $\Gamma$ is a finitely generated infinite simple group, then $d(\Gamma^p) \leq d(\Gamma) + 1$ for all $p \geq 1$ \cite{w:fg1}. However, there seems to be no example of a finitely generated infinite simple group whose growth sequence is not constant. The question on the existence of such a group, and more precisely of a finitely generated infinite simple group such that $d(\Gamma^2) = d(\Gamma) + 1$, has been called ``irreducibly difficult'' \cite{w:fg1, obratsov}.

\medskip

This last question admits a reformulation purely at the level of free groups. For a group $\Gamma$ of rank at most $n$, let $\mathcal{N}_n(\Gamma) \coloneqq \{ N \triangleleft F_n : F_n/N \cong \Gamma \}$ be the set of normal subgroups of $F_n$ whose quotient is isomorphic to $\Gamma$. An observation of Wiegold \cite{w:tarski} shows that if $\Gamma$ is a finitely generated infinite simple group of rank $n$, then $d(\Gamma^2) = n+1$ if and only if $\# \mathcal{N}_n(\Gamma) = 1$. This naturally led Wiegold to ask for two weaker examples:
\begin{itemize}
    \item \cite[Question 16.100]{kourovka} If $n \geq 2$, does there exist an infinite simple group $\Gamma$ of rank at most $n$ such that the action of $\Aut(F_n)$ on $\mathcal{N}_n(\Gamma)$ is transitive?\footnote{In \cite[Question 16.100]{kourovka}, Wiegold asks this question for $n = 2$ only. For $n \geq 4$, it was answered affirmatively in \cite[Theorem 4]{garionglasner}.}
    \item \cite[Question 6.45]{kourovka} If $n \geq 2$, does there exist an infinite simple group $\Gamma$ of rank at most $n$ such that the action of $\Aut(F_n)$ on $\mathcal{N}_n(\Gamma)$ has a fixed point?
\end{itemize}
Theorem \ref{intro:thm:answer} answers the latter question in the affirmative for all $n \geq 2$.

\medskip

These questions also fall into the broader context of the study of the dynamics of $\Aut(F_n)$ on the space of \emph{marked groups} $\mathcal{G}_n$. This is a compact space that parametrizes all groups of rank at most $n$, and is a central tool to study geometric and model-theoretic properties of finitely generated groups \cite{grigorchuk, champetier, limit, diversity, osin:law}. The dynamics of $\Aut(F_n)$ on $\mathcal{G}_n$ is very rich, and can shed light on generic properties of finitely generated groups \cite{osin:aut}. The above questions arise very naturally in this context, and are instances of the following general problem: To which extent do certain algebraic properties of the underlying isomorphism type influence the orbit of $\Aut(F_n)$ on a marked group? 

\begin{remark}
\label{rem:finite}
    Analogous conjectures on finite groups have also received much attention:
    \begin{itemize}
        \item \cite[Conjecture 2.5.4]{pak:question} If $n \geq 3$, and $\Gamma$ is a finite simple group, then the action of $\Aut(F_n)$ on $\mathcal{N}_n(\Gamma)$ is transitive.
        \item \cite[p.25]{lubotzky:question} If $n \geq 3$, and $\Gamma$ is a finite simple group, then the action of $\Aut(F_n)$ on $\mathcal{N}_n(\Gamma)$ has no fixed points.
    \end{itemize}
    The first conjecture is commonly attributed to Wiegold, and the case of $n = 2$ is known to fail \cite{counterexample, counterexample:statistic}. The second conjecture is by Lubotzky, and was recently shown to fail for $n = 2$ \cite{counterexample:baby}.
\end{remark}

Besides the ranks of products of infinite simple groups, the ranks of infinite simple groups themselves are also not well understood. There is only one known construction of a sequence of finitely generated simple groups with unbounded rank, due to Osin and Thom \cite{osinthom}. 
Our construction does not directly provide simple quotients with arbitrarily large rank.
On the contrary, as we have stated, one can easily ensure that they are $2$-generated.
In particular, this provides a negative answer to \cite[Question 17.62]{kourovka}, which asks whether every characteristic quotient of $F_n$ has rank $n$.

\medskip

Our construction of the quotients in Theorem \ref{intro:thm:answer} boils down to the following general result:

\begin{theorem}[Corollary \ref{cor:general}]
\label{intro:thm:general}
    Let $G$ be an acylindrically hyperbolic group, and let $H < G$ be finitely generated, infinite and normal. Then there exists a subgroup $N < H$, normal in $G$, such that $H/N$ is $2$-generated, infinite, and simple.
\end{theorem}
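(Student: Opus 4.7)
The natural strategy is a countable induction driven by an enumeration of pairs of elements of $H$, building $N$ as an ascending union of normal subgroups of $G$ lying inside $H$, while preserving acylindrical hyperbolicity of the ambient quotient and infiniteness of the image of $H$ at every stage.

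Enumerate the countably many pairs $(a_k,b_k)_{k\geq 1}$ in $H\times H$, and set $N_0=\{1\}$. The plan is to build inductively normal subgroups $N_0\leq N_1\leq\cdots$ of $G$, all contained in $H$, such that $G/N_k$ is non-elementary acylindrically hyperbolic, $H/N_k$ is infinite, and whenever $a_k\notin N_{k-1}$ the element $b_k$ belongs to $\langle\!\langle a_k\rangle\!\rangle_H\cdot N_k$, with normal closure taken in $H$. The limit $N\coloneqq\bigcup_k N_k$ then makes $H/N$ simple: if $M\triangleleft H$ satisfies $N<M\leq H$, pick $a\in M\setminus N$, any $b\in H$, and the index $k$ with $(a_k,b_k)=(a,b)$; since $N_{k-1}\leq N$ we have $a\notin N_{k-1}$, hence $b\in\langle\!\langle a\rangle\!\rangle_H\cdot N_k\subseteq M$, giving $M=H$. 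Infiniteness of $H/N$ is ensured during the construction (see below).

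The inductive step is the technical core. Work in $\bar G\coloneqq G/N_{k-1}$ and $\bar H\coloneqq H/N_{k-1}$ with $\bar a_k\neq 1$. Two standard inputs on acylindrically hyperbolic groups are used: (a) every infinite normal subgroup of a non-elementary acylindrically hyperbolic group contains loxodromic WPD elements, so pick such an $h\in\bar H$ for the natural $\bar G$-action; and (b) a small cancellation theorem in the spirit of Dahmani--Guirardel--Osin (with the refinements due to Coulon) which, given a loxodromic WPD element $r$ of sufficiently large translation length satisfying an appropriate small cancellation condition, produces an acylindrically hyperbolic quotient that kills $r$ while keeping any preassigned finite set of elements pairwise distinct. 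Build a ping-pong element
\[
r\coloneqq \bar b_k\cdot\prod_{i=1}^{j} h^{m_i}\,\bar a_k^{\epsilon_i}\,h^{-m_i},\qquad \epsilon_i\in\{\pm 1\},
\]
with exponents $m_1\ll m_2\ll\cdots\ll m_j$ chosen so that $r$ is loxodromic WPD of very large translation length and satisfies (b)'s small cancellation condition. The rightmost factor lies in $\langle\!\langle\bar a_k\rangle\!\rangle_{\bar H}$, and $r\in\bar H$. Since $\bar H$ is normal in $\bar G$, the normal closure $\langle\!\langle r\rangle\!\rangle_{\bar G}$ is contained in $\bar H$; applying (b) produces a normal subgroup $M\triangleleft\bar G$ with $M\subseteq\bar H$, $r\in M$, and $\bar G/M$ acylindrically hyperbolic. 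Lifting to $G$ defines $N_k\leq H$. In $H/N_k$, $\bar b_k$ equals the image of $\prod_{i=j}^{1} h^{m_i}\bar a_k^{-\epsilon_i}h^{-m_i}$ and therefore lies in $\langle\!\langle\bar a_k\rangle\!\rangle_{H/N_k}$, as required.

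The main obstacle is the construction of $r$: an element of the prescribed algebraic shape (a product of $\bar b_k$ with $\bar H$-conjugates of $\bar a_k^{\pm 1}$) that is simultaneously loxodromic WPD, of controllable translation length, and satisfies the small cancellation condition of (b). When $\bar a_k$ is elliptic, in particular torsion, this is delicate since products of elliptic elements are not automatically loxodromic; the idea is to space the exponents $m_i$ geometrically along the axis of $h$, forcing the ``elliptic defects'' apart so that a North--South-type argument yields loxodromicity, and then to use the further freedom in the $m_i$ to realize any required small cancellation condition. A secondary technical issue is the infiniteness of $H/N$ in the limit: fix at the outset an infinite sequence of distinct elements of $H$ (for instance positive powers of a loxodromic WPD element $s\in H$) and at step $k$ include the first $k$ of them in the preassigned finite set that (b) preserves pairwise distinct; the union $N=\bigcup_k N_k$ then delivers the desired infinite simple quotient $H/N$.
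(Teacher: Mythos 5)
Your overall architecture matches the paper's: build $N$ as an ascending union of normal subgroups of $G$ contained in $H$, at each stage passing to an acylindrically hyperbolic quotient where a relation of the type ``$b_k$ lies in the normal closure of $a_k$'' has been imposed, while protecting enough elements to keep the limit infinite. The difference is that the paper black-boxes the inductive step through Hull's Small Cancellation Theorem (\cite[Theorem 7.1]{hull}), applied to a \emph{suitable} subgroup (the normal closure of $h_{k+1}$), which simultaneously forces a whole finite generating set of $H_k$ into that normal closure, preserves acylindrical hyperbolicity, injectivity on a large ball, and — crucially — the absence of finite normal subgroups. You instead propose to construct a single explicit relator $r$ by hand in the DGO/Coulon style. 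That is a legitimate alternative strategy, but as written it has two real gaps.

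First, you never address finite normal subgroups, and the statement you are proving makes no hypothesis on $K(G)$. The paper deals with this by first replacing $G$ with $G/K(G)$ (Theorem~\ref{thm:K}) and then tracking $K(G_k)=\{1\}$ through the induction, so that normal closures of nontrivial elements of $H_k$ are always suitable (Lemma~\ref{lem:normal:suitable}). In your construction this matters concretely: if $\bar a_k$ lies in $K(\bar G)\cap\bar H$, then every $\bar H$-conjugate of $\bar a_k^{\pm 1}$ lies in the finite group $K(\bar G)$, so the factor $\prod_i h^{m_i}\bar a_k^{\epsilon_i}h^{-m_i}$ takes values in a fixed finite set. Consequently $r=\bar b_k\cdot(\text{bounded})$ has translation length bounded in terms of $\bar b_k$ alone, no matter how you choose the exponents $m_i$, and there is no way to arrange the ``very long relator'' small cancellation condition you invoke. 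Your induction then stalls at that step. You must quotient by $K(G)$ at the outset (and verify that the image of $H$ remains infinite normal and that a preimage of the resulting $N_0<H_0$ intersected with $H$ still works, exactly as in the paper's Corollary~\ref{cor:general}).

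Second, the ``main obstacle'' you flag — producing $r$ of the required algebraic form that is loxodromic WPD, has arbitrarily large translation length, and satisfies a small cancellation condition strong enough that killing $r$ \emph{itself} (not a proper power) yields an acylindrically hyperbolic quotient injective on a prescribed finite set — is genuinely the entire technical content of the step, and the North--South / spaced-exponents sketch does not discharge it. In particular you need to control the maximal elementary subgroup $E(r)$ (again this interacts with finite normal subgroups), you need the translation length of $r$ on the relevant Cayley graph, not just some auxiliary hyperbolic space, to be large compared to the acylindricity constants, and you need an argument that the quotient by $\langle\!\langle r\rangle\!\rangle$ rather than $\langle\!\langle r^n\rangle\!\rangle$ fits the small cancellation framework you are citing. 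Hull's theorem packages precisely this (the relators there are of the form $b\cdot(\text{long suitable word})$ and are killed directly), so either cite it as the paper does, or supply a complete verification of a $C(\epsilon,\mu,\rho)$-type condition for your family of relators.

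A smaller point: the existence of a loxodromic WPD element $h\in\bar H$ for the action of $\bar G$ on its acylindrical Cayley graph should be justified (it follows from $\bar H$ being an infinite normal subgroup, via Lemma~\ref{lem:normal}), and the infiniteness bookkeeping you sketch is fine but should be phrased in terms of injectivity on a ball $B_A(R)$, as in the paper, so that it is visibly preserved by each small cancellation quotient.
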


As a corollary we obtain:

\begin{corollary}[Corollary \ref{cor:aut}]
\label{intro:cor:aut}
    Let $\Gamma$ be a finitely generated group such that $[\Gamma : Z(\Gamma)] = \infty$ and $\Aut(\Gamma)$ is acylindrically hyperbolic. Then $\Gamma$ admits an infinite, $2$-generated, simple, characteristic quotient.
\end{corollary}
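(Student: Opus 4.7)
The plan is to apply Theorem \ref{intro:thm:general} directly with $G = \Aut(\Gamma)$ and $H = \Inn(\Gamma)$, then transfer the resulting quotient of $H$ back to a characteristic quotient of $\Gamma$.

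First, I would check the hypotheses. The group $\Aut(\Gamma)$ is acylindrically hyperbolic by assumption. The subgroup $\Inn(\Gamma)$ is normal in $\Aut(\Gamma)$ by a standard fact; it is finitely generated because it is a quotient of the finitely generated group $\Gamma$ (indeed $\Inn(\Gamma) \cong \Gamma/\ZZ(\Gamma)$); and it is infinite precisely because $[\Gamma : \ZZ(\Gamma)] = \infty$. Theorem \ref{intro:thm:general} then produces a subgroup $N < \Inn(\Gamma)$, normal in $\Aut(\Gamma)$, such that the quotient $\Inn(\Gamma)/N$ is infinite and simple.

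Second, I would transfer $N$ back to a subgroup $M$ of $\Gamma$. Since $\ZZ(\Gamma)$ is characteristic, $\Aut(\Gamma)$ acts naturally on $\Gamma/\ZZ(\Gamma)$, and under the identification $\Gamma/\ZZ(\Gamma) \cong \Inn(\Gamma)$ this action coincides with the conjugation action of $\Aut(\Gamma)$ on its normal subgroup $\Inn(\Gamma)$ (as one checks from the formula $\varphi \iota_g \varphi^{-1} = \iota_{\varphi(g)}$). Hence $N$ being normal in $\Aut(\Gamma)$ under conjugation is equivalent to $N$ being $\Aut(\Gamma)$-invariant as a subset of $\Gamma/\ZZ(\Gamma)$. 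Let $M \leq \Gamma$ denote the preimage of $N$ under the canonical projection $\Gamma \twoheadrightarrow \Gamma/\ZZ(\Gamma)$. Then $M$ is preserved setwise by every element of $\Aut(\Gamma)$, i.e. $M$ is a characteristic subgroup of $\Gamma$, and
\[
\Gamma/M \;\cong\; (\Gamma/\ZZ(\Gamma))\big/N \;\cong\; \Inn(\Gamma)/N,
\]
which is infinite and simple. This is the desired characteristic quotient.

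There is no real obstacle here: the argument is a direct deduction from Theorem \ref{intro:thm:general}, and the only point that needs a sentence of justification is the compatibility between conjugation normality in $\Aut(\Gamma)$ and characteristicness of the pullback in $\Gamma$, which follows immediately from the fact that $\ZZ(\Gamma)$ is itself characteristic.
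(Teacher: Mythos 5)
Your proof is correct and matches the paper's approach: the paper also applies Corollary~\ref{cor:general} to $G = \Aut(\Gamma)$, $H = \Inn(\Gamma)$, and the compatibility argument you spell out (using $\varphi \iota_g \varphi^{-1} = \iota_{\varphi(g)}$ to see that the preimage in $\Gamma$ of a normal subgroup of $\Aut(\Gamma)$ is characteristic) is exactly the content of the paper's Lemma~\ref{lem:char}.
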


The existence of infinite simple characteristic quotients of $F_n$ (i.e., Wiegold's question \cite[Question 6.45]{kourovka}) then follows, by the acylindrical hyperbolicity of $\Aut(F_n)$, due to Genevois and Horbez \cite{auto:infend}. In fact, acylindrical hyperbolicity is known in much greater generality, due to Escalier, Genevois, Horbez and Martin \cite{auto:graph2, auto:graph1, auto:oneend, auto:infend, escalierhorbez}, which allows to obtain infinite simple characteristic quotients in large classes of negatively curved groups; see Example \ref{ex:list} for a comprehensive list.

\medskip

Theorem~\ref{intro:thm:general} is proven via a small cancellation construction over acylindrically hyperbolic groups. This construction is quite flexible, and we obtain additional control over the infinite simple characteristic quotients. In particular, we can map finite sets injectively, preserve the torsion, and embed outer automorphism groups. Moreover, we can embed any countable subgroup in an infinite simple characteristic quotient. This leads to the following stronger and more general version of Theorem \ref{intro:thm:answer}:

\begin{theorem}[Theorem \ref{thm:answer}]
\label{intro:thm:answer:strong}
    Let $\Gamma$ be a torsion-free, non-elementary hyperbolic group. Then, for every countable group $L$, there exists a characteristic subgroup $N < \Gamma$ such that $\Gamma / N$ is simple, $2$-generated, contains $L$ as a subgroup, and contains an element of order $n \in \mathbb{N}$ if and only if $L$ does.
\end{theorem}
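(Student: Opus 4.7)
The plan is to perform the small cancellation construction underlying Theorem~\ref{intro:thm:general} in an equivariant manner that simultaneously embeds $L$. Consider the semidirect product $\widetilde G = (\Gamma \ast L) \rtimes \Aut(\Gamma)$, where $\Aut(\Gamma)$ acts on $\Gamma$ naturally and trivially on $L$, so that $\Gamma \ast L$ is normal in $\widetilde G$. I aim to find a subgroup $M \triangleleft \widetilde G$ with $M \subseteq \Gamma \ast L$ such that (a) $M$ contains $\{\ell_k g_k^{-1}\}$ for a chosen family $g_k \in \Gamma$ and generators $\ell_k$ of $L$, making $\Gamma \hookrightarrow \Gamma \ast L \twoheadrightarrow (\Gamma \ast L)/M$ surjective; (b) $M \cap L = \{1\}$, so $L$ injects into the quotient; (c) $(\Gamma \ast L)/M$ is simple; and (d) the quotient has the same torsion spectrum as $L$. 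Setting $N := M \cap \Gamma$, normality of $M$ in $\widetilde G$ forces $N$ to be $\Aut(\Gamma)$-invariant, hence characteristic, and (a) gives $(\Gamma \ast L)/M \cong \Gamma/N$, yielding the theorem.

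Both $L$ and $\Aut(\Gamma)$ are countable (the latter because $\Gamma$ is finitely generated), so I would build $M = \bigcup_k M_k$ as an increasing union of normal subgroups. At step $k$, one handles four enumerated tasks: imposing a relation $\ell_k = g_k$ for a generic loxodromic $g_k$ in the current quotient; killing the $k$-th conjugacy class in a diagonal enumeration (to force simplicity in the limit); controlling torsion; and closing the set of already added relators under the $\Aut(\Gamma)$-action. Each enlargement is carried out by the small cancellation theorem over acylindrically hyperbolic groups (of Dahmani--Guirardel--Osin type), applied to the acylindrically hyperbolic quotient $(\Gamma \ast L)/M_{k-1}$, which retains loxodromic elements coming from the image of $\Gamma$ by induction. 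Choosing the defining relators to be sufficiently long loxodromic words, transverse to $L$ and to their $\Aut(\Gamma)$-orbits, ensures the small cancellation condition is satisfied for the entire orbit simultaneously, so equivariance is compatible with the other tasks.

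The main obstacle is to maintain throughout the construction that $M_k \cap L = \{1\}$ and that no new torsion is introduced. This rests on the Greendlinger-type injectivity features of small cancellation: short words, and in particular non-trivial elements of $L$, survive each enlargement $\Gamma \ast L \to (\Gamma \ast L)/M_k$, and any torsion element in the limit quotient must already have finite order at some finite stage, hence must come from $L$ since $\Gamma$ is torsion-free. A secondary subtlety is that $\widetilde G$ need not itself be acylindrically hyperbolic; the resolution is to apply the small cancellation theorem only to the acylindrically hyperbolic subgroup $\Gamma \ast L$, while enforcing $\Aut(\Gamma)$-invariance of the set of relators so that the resulting normal subgroup of $\Gamma \ast L$ is automatically normal in the full $\widetilde G$.
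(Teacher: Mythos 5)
The approach contains a fatal structural obstruction coming from your choice of ambient group $\widetilde G = (\Gamma \ast L) \rtimes \Aut(\Gamma)$, in which $\Aut(\Gamma)$ is declared to commute with $L$. You require $M \triangleleft \widetilde G$ and $\ell_k g_k^{-1} \in M$. Since $M$ is normal in $\widetilde G$ and $\alpha \in \Aut(\Gamma)$ acts trivially on $L$, we get $\alpha(\ell_k g_k^{-1})\alpha^{-1} = \ell_k \alpha(g_k)^{-1} \in M$, hence $g_k\alpha(g_k)^{-1} \in M \cap \Gamma = N$ for \emph{every} $\alpha \in \Aut(\Gamma)$. Specializing to $\alpha = \ad_h$ with $h \in \Gamma$ shows that the image of $g_k$ in $\Gamma/N$ is central. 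Since you want $\Gamma/N$ to be non-abelian simple, its center is trivial, so $g_k \in N$, and then $\ell_k = (\ell_k g_k^{-1})\cdot g_k \in M$, contradicting $M \cap L = \{1\}$ unless $\ell_k = 1$. Thus the very normality in $\widetilde G$ that you need for the characteristic conclusion annihilates the generators of $L$, and the construction collapses to the case $L$ trivial.

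The paper sidesteps this by making a different choice of ambient group: it forms the free product $P = \Aut(\Gamma) \ast L$ (not a semidirect product), applies Hull's small cancellation theorem with suitable subgroup $H = \Inn(\Gamma) \cong \Gamma$, and then observes (Lemma~\ref{lem:char}) that any normal subgroup of $\Aut(\Gamma)$ contained in $\Inn(\Gamma)$ automatically pulls back to a characteristic subgroup of $\Gamma$. In $\Aut(\Gamma)\ast L$ there is no imposed commutation between $\Aut(\Gamma)$ and $L$, so conjugating a relator $bh$ (with $b\in L$, $h\in\Inn(\Gamma)$) by $\alpha\in\Aut(\Gamma)$ does not decompose as $b$ times a $\Gamma$-element and exerts no constraint of the centrality type above; moreover $P$ is acylindrically hyperbolic outright (it is a nontrivial free product), which removes your separate worry about $\widetilde G$ failing to be acylindrically hyperbolic and the need to hand-enforce $\Aut(\Gamma)$-invariance of an infinite family of relators. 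Your plan would also require a small cancellation theorem handling a full $\Aut(\Gamma)$-orbit of relators at once, which Theorem~\ref{thm:sc} does not provide; the paper only ever adds finitely many relators in the large ambient group.
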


It is well-known that a group $\Gamma$ as in the statement is SQ-universal, that is, every countable group embeds in a quotient of $\Gamma$ \cite{SQ1, SQ2}. Theorem \ref{intro:thm:answer:strong} shows that SQ-universality can be achieved just using simple characteristic quotients. By choosing $L$ in a family of pairwise non-isomorphic finitely generated groups, we deduce in particular that $\Gamma$ admits continuum many pairwise non-isomorphic simple characteristic quotients, and moreover the torsion can be prescribed (Corollary \ref{cor:answer}).

\medskip

We also strengthen Theorem \ref{intro:thm:answer} in another direction, by producing different examples up to quasi-isometry:

\begin{theorem}[Theorem \ref{thm:QI}]
\label{intro:thm:QI}
    There exist continuum many, pairwise non-quasi-isometric, infinite, $2$-generated, simple, characteristic quotients of $F_n$.
\end{theorem}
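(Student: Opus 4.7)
The plan is to strengthen Theorem~\ref{intro:thm:answer:strong} by choosing the embedded subgroup $L$ from a continuum family of pairwise non-quasi-isometric finitely generated groups, and then to use a pigeonhole argument on a suitable ``coarse subgroup spectrum'' invariant to conclude quasi-isometric diversity.

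First, I would fix a family $\{L_\alpha\}_{\alpha \in 2^{\mathbb N}}$ of pairwise non-quasi-isometric finitely generated groups; such families are classical (for instance, continuum many Grigorchuk-type groups of distinct growth, or continuum many finitely generated groups with distinct Hilbert space compression). For each $\alpha$, Theorem~\ref{intro:thm:answer:strong} applied with $\Gamma = F_n$ and $L = L_\alpha$ produces a simple characteristic quotient $G_\alpha = F_n/N_\alpha$ containing $L_\alpha$ as a subgroup. A revisit of the small cancellation construction underlying Theorem~\ref{intro:thm:answer:strong} should ensure that $L_\alpha$ is in fact quasi-isometrically embedded (undistorted) in $G_\alpha$; this undistortedness of the input subgroup is a standard feature of small cancellation embeddings when the input is finitely generated and placed with care.

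Next, introduce the \emph{coarse subgroup spectrum}
\[
R(G) := \{[H] : H \text{ is a finitely generated group admitting a quasi-isometric embedding into } G\},
\]
which is manifestly a quasi-isometry invariant of $G$. By construction $[L_\alpha] \in R(G_\alpha)$. If one can show that $R(G_\alpha)$ is countable for each $\alpha$, the conclusion follows immediately by pigeonhole: every quasi-isometry class of $G_\alpha$'s gathers at most countably many indices $\alpha$ (since the $[L_\alpha]$ are pairwise distinct in $R$), so the $2^{\aleph_0}$ parameters must spread across $2^{\aleph_0}$ many quasi-isometry classes.

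The main obstacle is this countability of $R(G_\alpha)$: heuristically, a quasi-isometric embedding of a finitely generated group into $G_\alpha$ ought to be ``close'' to one arising from one of the countably many finitely generated subgroups of $G_\alpha$, but making this precise is subtle and may require using negatively curved features inherited from the construction. If this direct approach proves too delicate, a backup strategy is to encode the continuum parameter directly into the sequence of relator lengths in the iterative small cancellation construction, and to distinguish the resulting (lacunary-hyperbolic-like) quotients via their asymptotic cones or via an isoperimetric invariant; the extra bookkeeping must then be shown to be compatible with the simplicity and characteristic properties imposed by Theorem~\ref{intro:thm:answer:strong}.
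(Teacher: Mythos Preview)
Your approach has a genuine gap at exactly the point you flag as ``the main obstacle''. The invariant $R(G)$ is \emph{not} known to be countable for the groups you build, and your heuristic that a quasi-isometric embedding into $G_\alpha$ should be close to an actual subgroup is simply false in general: a quasi-isometric embedding is only a coarse map, with no algebraic content, and there is no mechanism tying it to the countably many finitely generated subgroups of $G_\alpha$. The groups $G_\alpha$ are infinitely presented direct limits of small cancellation quotients, and nothing in the construction controls which finitely generated metric spaces coarsely embed into them. Without countability of $R(G_\alpha)$ the pigeonhole step collapses. A secondary gap is the undistortedness of $L_\alpha$: in the construction behind Theorem~\ref{intro:thm:answer:strong}, $L$ is inserted as an \emph{elliptic} subgroup with respect to an infinite generating set, and then one passes to a further direct limit to force simplicity; neither step gives any control over the distortion of $L$ in the word metric of the final quotient.

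The paper takes a completely different route that sidesteps any need to understand the geometry of individual quotients. It works in the space of marked groups $\Gn$ and applies the Minasyan--Osin--Witzel criterion: a non-empty perfect subset of $\Gn$ with a dense set of finitely presented points is automatically quasi-isometrically diverse. The new ingredient (Lemma~\ref{lem:fp}) is a single finitely presented acylindrically hyperbolic quotient $G_0$ of $\Aut(F_n)$ onto which $F_n$ already surjects; every further quotient of $G_0$ is then automatically characteristic (Lemma~\ref{lem:factor:char}). One lets $\mathcal{H}$ be the set of finitely presented acylindrically hyperbolic marked quotients of $G_0$, checks that $\cl(\mathcal{H})$ is perfect, and shows that the simple characteristic quotients form a comeager $G_\delta$-subset $\mathcal{S}\subset\cl(\mathcal{H})$ (using that the intermediate groups in the iterative construction of Theorem~\ref{thm:general} stay finitely presented when $H=G$). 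The criterion then yields continuum many quasi-isometry classes in $\mathcal{S}$ for free. Your backup idea of encoding parameters into relator lengths is closer in spirit, but the paper's Baire-category argument avoids having to compute any asymptotic invariant explicitly.
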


Theorem \ref{thm:QI} will be proven via a criterion for quasi-isometric diversity in the space of marked groups \cite{diversity}. In order to apply it, we will need to slightly modify our construction to produce infinite simple characteristic quotients of $F_n$ as marked limits of finitely presented acylindrically hyperbolic characteristic quotients of $F_n$.

\medskip

We end the paper with two questions on possible extensions and variations of our results.

\begin{acks*}
The first author acknowledges support from the Agence Nationale de la Recherche under the grant GOFR (ANR-22-CE40-0004).
The second author is supported by the Herchel Smith Postdoctoral Fellowship Fund. 
The authors are indebted to Yash Lodha for referring them to Wiegold's question; and to Anthony Genevois, Camille Horbez, Ashot Minasyan and Markus Steenbock for comments on a previous version.
The authors would also like to thank the organizers of the conference \emph{Groups and Rigidity Around the Zimmer Program} held in Ventotene in September 2023, where this work started.
Finally, the authors thank an anonymous refereee for their comments and suggestions.
\end{acks*}

\section{Small cancellation theory}

A method to construct simple quotients of groups goes as follows. Start with a finitely generated ``negatively curved'' group $\Gamma$, and for an element $1 \neq g \in \Gamma$ add relations that ensure that each generator of $\Gamma$ belongs to the normal closure of $g$. If this is done in a way that the quotient is still ``negatively curved'', then the process can be iterated, and the direct limit will be normally generated by each of its non-identity elements, thus simple. Versions of this construction can be found e.g. in \cite{Olshanskii:1979b}.

What makes this process rigorous is the theory of small cancellation. In our setting, in order to obtain \emph{characteristic} quotients of $F_n$, we will perform a similar construction at the level of the automorphism group $\Aut(F_n)$. Therefore the right notion of ``negative curvature'' for us is acylindrical hyperbolicity.

\subsection{Acylindrically hyperbolic groups}
Acylindricity appears in the work of Bowditch \cite{Bowditch:2008bj} as a way to extend the techniques of hyperbolic geometry beyond the scope of proper and co-compact actions.
The class of all groups with a non-elementary, acylindrical action on a hyperbolic space was later named \emph{acylindrically hyperbolic groups} by Osin \cite{osin}.
It encompasses several groups of interest in geometric group theory, such as non-elementary hyperbolic and relatively hyperbolic groups, mapping class groups and outer automorphism groups of free groups.

\medskip 
As in \cite[III.H, Definition 1.1]{BH},
we say that a metric space is \emph{$\delta$-hyperbolic} (in the sense of Gromov) if its geodesic triangles are $\delta$-slim.
A metric space is \emph{hyperbolic}, if it is $\delta$-hyperbolic for some $\delta \in\mathbb R_+$.

\begin{definition}
    Let $G$ be a group acting by isometries on a metric space $X$. We say that the action is \emph{acylindrical} if for all $\varepsilon > 0$ there exist $R > 0$ and $n \in \mathbb{N}$ such that the following holds. For all $x, y \in X$ such that $d(x, y) \geq R$, the set $\{ g \in G : d(x, gx) \leq \varepsilon, d(y, gy) \leq \varepsilon \}$ contains at most $n$ elements.
\end{definition}

Recall that an action of a group $G$ on a hyperbolic space $X$ is \emph{non-elementary} if $G$ has more than two accumulation points in the boundary $\partial X$. 
 A group $G$ is \emph{acylindrically hyperbolic} if it admits a non-elementary, acylindrical action on a hyperbolic space $X$.
Note that the action of a subgroup of $G$ on $X$ is acylindrical, and therefore every subgroup of $G$ acting non-elementarily is also acylindrically hyperbolic.

\begin{lemma}[{\cite[Lemma 7.2]{osin}}]
\label{lem:normal}
    Let $G$ be a group with a non-elementary acylindrical action on a hyperbolic space $X$. If $H < G$ is an infinite normal subgroup, then the action of $H$ on $X$ is also non-elementary. Therefore, $H$ is acylindrically hyperbolic.
\end{lemma}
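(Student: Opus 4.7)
The plan is first to establish that $H$ does not have bounded orbits on $X$, and then to deduce from this, together with normality, that the $H$-action has more than two limit points on $\partial X$.

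\textbf{Step 1: $H$ acts with unbounded orbits.} I would argue by contradiction. Suppose there exist $x \in X$ and $D \geq 0$ such that $d(x, hx) \leq D$ for every $h \in H$. Since the $G$-action on $X$ is non-elementary and acylindrical, $G$ contains a loxodromic element $g$, by the classification of acylindrical actions on hyperbolic spaces (Osin). Using that $H$ is normal in $G$, for every $n \in \mathbb{Z}$ and every $h \in H$ one has $g^{-n} h g^n \in H$, so $d(x, g^{-n} h g^n x) \leq D$, which rewrites as $d(g^n x, h g^n x) \leq D$. Taking $y_+ = g^N x$ and $y_- = g^{-N} x$ for $N$ large, these two points are arbitrarily far apart while every $h \in H$ moves each of them by at most $D$. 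Acylindricity applied with $\varepsilon = D$ now forces the cardinality of $H$ to be finite once $N$ is large enough, contradicting the hypothesis that $H$ is infinite.

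\textbf{Step 2: The $H$-action is non-elementary.} The action of $H$ on $X$ inherits acylindricity from $G$. Having just shown it has unbounded orbits, Osin's classification leaves two possibilities: either it is lineal, meaning $H$ fixes setwise a unique pair $\{\xi_+, \xi_-\} \subset \partial X$ and contains a loxodromic with these endpoints, or it is non-elementary. Suppose for contradiction that it is lineal. The limit set $\Lambda(H) = \{\xi_+, \xi_-\}$ is $H$-invariant by construction, and by normality also $G$-invariant: for every $g \in G$, $g \Lambda(H) = \Lambda(g H g^{-1}) = \Lambda(H)$. On the other hand, the $G$-action is non-elementary, so its limit set $\Lambda(G) \subset \partial X$ contains more than two points; and for non-elementary acylindrical actions $\Lambda(G)$ is the minimal non-empty closed $G$-invariant subset of $\partial X$. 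This would force $\Lambda(G) \subseteq \{\xi_+, \xi_-\}$, a contradiction. Therefore the $H$-action is non-elementary, and together with acylindricity this means $H$ is acylindrically hyperbolic.

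The main obstacle I anticipate is Step~1: one must quote (or re-prove) the fact that an acylindrical non-elementary action on a hyperbolic space automatically contains a loxodromic element, and one must apply acylindricity at the two well-separated points $g^{\pm N} x$ with care to derive the cardinality bound. Step~2 is then a short formal consequence of the minimality of the limit set under a non-elementary action, combined with the $G$-invariance forced by normality.
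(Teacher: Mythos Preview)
The paper does not give its own proof of this lemma: it is stated with a citation to Osin and no argument is supplied. So there is nothing in the paper to compare your proposal against line by line.

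That said, your argument is essentially the standard one and is correct. Step~1 is fine: normality transports the uniform displacement bound along the axis of a loxodromic of $G$, and acylindricity at two far-apart points $g^{\pm N}x$ then bounds $|H|$. In Step~2 you invoke minimality of $\Lambda(G)$; this is true but you might find it cleaner to bypass it. If $G$ preserved a two-point set $\{\xi_+,\xi_-\}\subset\partial X$, then any loxodromic $g\in G$ would, by North--South dynamics, have both of its fixed points inside that set; since $G$ is non-elementary it contains two loxodromics with disjoint fixed-point pairs, which is impossible. This avoids having to justify that $\Lambda(G)$ is the unique minimal closed invariant subset.
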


Given a (possibly infinite) generating set $A \subset G$, we denote by $\Gamma(G,A)$ the Cayley graph of $G$ with respect to $A$.
For every $R \in \mathbb R_+$, we write $B_A(R)$ for the set of all the elements of $G$ corresponding to vertices of $\Gamma(G,A)$ in the ball of radius $R$  centered at the identity.
The space $X$ appearing in the definition of acylindrically hyperbolic group can always be chosen to be a Cayley graph.

\begin{theorem}[{\cite[Theorem 1.2]{osin}}]
\label{thm:cayley}
    Let $G$ be an acylindrically hyperbolic group. 
    There exists a (typically infinite) generating set $A \subset G$ such that the Cayley graph $\Gamma(G, A)$ is hyperbolic and the action of $G$ on it is non-elementary and acylindrical.
\end{theorem}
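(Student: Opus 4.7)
The plan is to promote an arbitrary non-elementary acylindrical action of $G$ on a hyperbolic space $X$ into a non-elementary acylindrical action on a suitably chosen Cayley graph of $G$ itself. The key mechanism is that acylindrically hyperbolic groups always contain a virtually cyclic hyperbolically embedded subgroup, and from such a subgroup one can manufacture a compatible generating set $A$.

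First, starting from a non-elementary acylindrical action of $G$ on $X$, I would extract a loxodromic element $h \in G$. This is standard: the hypothesis that $G$ has more than two limit points in $\partial X$, combined with acylindricity, rules out the elliptic, parabolic, and lineal cases of the Gromov classification of isometric group actions on hyperbolic spaces, so the action must be of general type and in particular must contain a loxodromic element; applying the same argument to conjugates provides a second, independent loxodromic, which will be needed later.

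Second, I would analyze the maximal elementary subgroup $E(h) < G$ that stabilizes $\{h^{+\infty}, h^{-\infty}\} \subset \partial X$ setwise. Acylindricity forces $E(h)$ to be virtually cyclic, and, crucially, hyperbolically embedded in $G$: there is a (possibly infinite) subset $Y \subset G$ such that $G = \langle Y \cup E(h) \rangle$, the Cayley graph $\Gamma(G, Y \sqcup E(h))$ is hyperbolic, and the bounded coset intersection property holds for $E(h)$ with respect to the associated relative metric. With this in hand, I would set $A := Y \sqcup (E(h) \setminus \{1\})$. The Cayley graph $\Gamma(G, A)$ is then hyperbolic by the defining properties of the hyperbolic embedding, and the left multiplication action of $G$ is automatically non-elementary because $h$ remains loxodromic on $\Gamma(G, A)$ and retains its independent loxodromic partner from the first step. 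Acylindricity of the new action would then be transferred from the acylindricity hypothesis on $X$ using the controlled geometry afforded by the hyperbolic embedding: coarse stabilizers of far-apart pairs of vertices in $\Gamma(G, A)$ are controlled by coarse stabilizers in $X$ along the quasi-axis of $h$.

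I expect the main obstacle to be the second step, namely establishing that $E(h)$ is hyperbolically embedded in $G$. This is the technical heart of the characterization of acylindrical hyperbolicity and requires careful projection estimates onto quasi-axes of $h$ in $X$, together with a verification of the bounded coset intersection property. Reconciling the combinatorial conditions of hyperbolic embedding with the purely metric hypothesis of acylindricity on $X$ is the key point that makes the passage from an abstract acylindrical action on some hyperbolic space to an acylindrical action on a Cayley graph of $G$ both possible and nontrivial.
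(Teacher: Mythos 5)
This statement is cited in the paper as Theorem~1.2 of Osin's \emph{Acylindrically hyperbolic groups}; the paper gives no proof of its own, so there is nothing internal to compare against. Your sketch is, in broad outline, exactly Osin's route: extract a loxodromic, show the maximal elementary subgroup $E(h)$ is virtually cyclic and hyperbolically embedded, then use the hyperbolically embedded subgroup to manufacture a generating set.

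The genuine gap is in your last step. Having $E(h)$ hyperbolically embedded with relative generating set $Y$ gives that $\Gamma(G, Y \sqcup E(h))$ is hyperbolic, but it does \emph{not} by itself give that the left-multiplication action of $G$ on that Cayley graph is acylindrical, and this acylindricity is not ``transferred'' from the acylindricity of the action on $X$. The metrics on $\Gamma(G, Y \sqcup E(h))$ and on $X$ are a priori unrelated, and there is no reason for coarse stabilizers of far-apart vertices in the new graph to be controlled by those in $X$; in fact, for an arbitrary choice of $Y$ the action need not be acylindrical at all. The missing ingredient is Osin's Theorem~5.4 (in the same paper), which says that if $H \hookrightarrow_h (G, X)$ then one can enlarge $X$ to a set $Y \supseteq X$ so that $H$ is still hyperbolically embedded in $(G, Y)$ \emph{and} the action of $G$ on $\Gamma(G, Y \sqcup H)$ is acylindrical. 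This is a separate, nontrivial construction — one adds to the generating set carefully chosen ``shortcut'' elements so that acylindricity becomes verifiable — and it, rather than any transfer argument, is what makes the passage from the implication ``there is a hyperbolically embedded subgroup'' to ``there is an acylindrical Cayley graph'' work. Once you invoke that theorem, your outline is correct, and the non-elementarity follows as you say from the persistence of two independent loxodromics.
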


Another important property of acylindrically hyperbolic groups is that they have maximal finite normal subgroups.

\begin{theorem}[{\cite[Theorem 2.23]{DGO}, \cite[Lemma 5.10]{hull}}]
\label{thm:K}
    If $G$ is an acylindrically hyperbolic group, then $G$ contains a unique maximal finite normal subgroup $K$. 
    Moreover, $G/K$ is acylindrically hyperbolic, and has no non-trivial finite normal subgroup.
\end{theorem}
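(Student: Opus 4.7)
The plan is to first use acylindricity to bound the order of any finite normal subgroup of $G$ by a constant depending only on the action, and then deduce the existence of a unique maximal finite normal subgroup $K$ by a product-of-normal-subgroups argument. The fact that $G/K$ has no non-trivial finite normal subgroup will then be essentially a lifting exercise, and the acylindrical hyperbolicity of $G/K$ will require a separate step of pushing the action of $G$ on a hyperbolic space down to the quotient.

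Concretely, fix a non-elementary acylindrical action of $G$ on a hyperbolic space $X$, and let $F \triangleleft G$ be finite. Pick a base point $x_0 \in X$ and set $D \coloneqq \max_{f \in F} d(x_0, f x_0) < \infty$. For any $g \in G$ and any $f \in F$, normality gives $g^{-1}fg \in F$, hence
\[
d(gx_0, f g x_0) \;=\; d(x_0, g^{-1}fg\, x_0) \;\leq\; D.
\]
Applying acylindricity at $\varepsilon = D$ yields $R$ and $n$ such that as soon as $d(x_0, gx_0) \geq R$, the set of elements of $G$ moving both $x_0$ and $gx_0$ by at most $D$ has cardinality at most $n$. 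Since the action is non-elementary, $G$-orbits are unbounded in $X$, so such a $g$ exists; consequently $|F| \leq n$, with $n$ independent of $F$. Let now $K$ be a finite normal subgroup of maximal order, and $F \triangleleft G$ any other finite normal subgroup. The product $FK$ is normal (as a product of two normal subgroups) and finite, with $|FK| \leq |F||K|$, so by maximality $K = FK \supseteq F$. In particular $K$ is unique.

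If $\bar F \triangleleft G/K$ is finite, its preimage $F \triangleleft G$ is an extension of the finite group $\bar F$ by the finite group $K$, hence finite; maximality forces $F \subseteq K$, so $\bar F$ is trivial. To prove acylindrical hyperbolicity of $G/K$, I would invoke Theorem~\ref{thm:cayley} to realize $G$ as acting non-elementarily and acylindrically on a Cayley graph $\Gamma(G,A)$ with $A \supseteq K$, and then collapse each left coset of $K$ to a point. The resulting graph is at bounded Hausdorff distance from the Cayley graph of $G/K$ with respect to the image of $A$, carries a natural action of $G/K$, and remains hyperbolic; the acylindricity constants change only by factors controlled by $|K|$, and non-elementarity is preserved because collapsing finite $K$-orbits does not affect the limit set in the Gromov boundary.

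The main obstacle is this last step: while the bound on $|F|$ and the uniqueness of $K$ are essentially immediate consequences of acylindricity, verifying that quotienting by $K$ preserves hyperbolicity of the Cayley graph together with acylindricity and non-elementarity of the action requires careful tracking of the acylindricity constants through the collapsing operation. Everything else is bookkeeping.
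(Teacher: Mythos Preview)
The paper does not prove this statement; it is quoted from \cite{DGO} and \cite{hull} and used as a black box. So there is no in-paper argument to compare against, only your attempt to reconstruct one.

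Your argument has a genuine gap in the first step. You fix a basepoint $x_0$ \emph{before} choosing the finite normal subgroup $F$, then set $D = \max_{f \in F} d(x_0, f x_0)$ and invoke acylindricity at $\varepsilon = D$. But $D$ depends on $F$, hence so do the acylindricity constants $R$ and $n$; the sentence ``consequently $|F| \leq n$, with $n$ independent of $F$'' is therefore unjustified. As written you have only shown that each finite normal subgroup is finite, which is vacuous, and you cannot then ``let $K$ be a finite normal subgroup of maximal order'' without knowing such a maximum exists.

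The standard repair is to let the basepoint depend on $F$: since $F$ is finite, its orbit of any point is bounded, and the quasi-centre lemma for hyperbolic spaces (as in Lemma~\ref{lem:elliptic}, or \cite[III.$\Gamma$, Lemma~3.3]{BH}) gives a point $p$ with $d(p, fp) \leq 10\delta$ for every $f \in F$. Normality then propagates this to every translate $gp$, exactly as in your computation, and now the displacement bound is $10\delta$ \emph{uniformly} in $F$. Acylindricity at $\varepsilon = 10\delta$ then yields a genuine uniform bound on $|F|$, and the rest of your product-of-normal-subgroups argument goes through. Your treatment of the quotient $G/K$ (no finite normal subgroups by lifting, acylindrical hyperbolicity by collapsing $K$-cosets in a suitable Cayley graph) is along the right lines and matches the approach in \cite{hull}, though the acylindricity estimate for the quotient action deserves a line or two more than you give it.
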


In the remainder of the article we denote by $K(G)$ the unique maximal finite normal subgroup of an acylindrically hyperbolic group $G$.

\begin{definition}
	Let $G$ be a group with a non-elementary acylindrical action on a hyperbolic space $X$.
	Let $H$ be a subgroup of $G$.
	\begin{itemize}
		\item $H$ is \emph{suitable} (with respect to this action) if it is non-elementary and does not normalize a non-trivial finite subgroup of $G$.
		\item $H$ is \emph{elliptic} (with respect to this action) if some, hence any, orbit of $H$ is bounded.
	\end{itemize}
\end{definition}

Note that a suitable subgroup can only exist if $K(G) = \{ 1 \}$.
A good source of suitable subgroups is normal subgroups \cite[Lemma 3.23]{CIOS}. Here we will need something slightly stronger: recall that a subgroup $H < G$ is \emph{subnormal} if there exists a chain of subgroups $H = H_0 < H_1 < \cdots < H_k = G$ such that $H_i$ is normal in $H_{i+1}$ for $0 \leq i < k$.

\begin{lemma}
\label{lem:normal:suitable}
	Let $G$ be a group with a non-elementary acylindrical action on a hyperbolic space $X$.
	Assume that $K(G)  = \{1\}$.
	Then every non-trivial subnormal subgroup of $G$ is suitable with respect to this action.
\end{lemma}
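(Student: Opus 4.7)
The plan is to proceed by induction on the subnormal depth of $H$ in $G$. In the base case $H = G$: non-elementarity holds by hypothesis, and any finite subgroup of $G$ normalized by $G$ is a finite normal subgroup, hence trivial since $K(G) = \{1\}$. For the inductive step, write $H \triangleleft L$ where $L$ is subnormal in $G$ of smaller depth. As $H$ is non-trivial, so is $L$, and the inductive hypothesis gives that $L$ is suitable. In particular, $L$ acts acylindrically and non-elementarily on $X$, and $K(L) = \{1\}$, because any finite normal subgroup of $L$ is a finite subgroup of $G$ normalized by $L$, which must be trivial by the suitability of $L$. If $H$ were finite, it would be a non-trivial finite normal subgroup of $L$, contradicting $K(L) = \{1\}$; so $H$ is infinite, and Lemma~\ref{lem:normal} applied to $L$'s action on $X$ gives that $H$ is non-elementary.

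To show that $H$ does not normalize any non-trivial finite subgroup of $G$, I argue by contradiction. Suppose $F \leq G$ is a non-trivial finite subgroup normalized by $H$; I will construct a finite, non-trivial subgroup of $G$ containing $F$ and normalized by $L$, which contradicts the suitability of $L$. Let $\mathcal{F}_H$ denote the collection of all finite subgroups of $G$ normalized by $H$, and set $E \coloneqq \langle F' : F' \in \mathcal{F}_H \rangle$. I claim $E$ is finite. Since $H$ is non-elementary in the acylindrically hyperbolic group $G$, it contains two independent loxodromic elements $h, h'$. Given any $F' \in \mathcal{F}_H$, conjugation by $h$ yields a homomorphism $\langle h \rangle \to \Aut(F')$ with finite image, so some power $h^n$ centralizes $F'$; hence $F' \subseteq \CC_G(h^n)$, which lies in the (virtually cyclic) elementary closure $E_G(h)$ of $h$. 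The symmetric argument gives $F' \subseteq E_G(h')$, so $F' \subseteq E_G(h) \cap E_G(h')$. Since $h$ and $h'$ are independent, this intersection is finite, providing a uniform bound, so $E \subseteq E_G(h) \cap E_G(h')$ is itself finite.

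Finally, $L$ normalizes $E$: for any $l \in L$ and $F' \in \mathcal{F}_H$, one has $l H l^{-1} = H$, so $H$ also normalizes $l F' l^{-1}$, i.e.\ $l F' l^{-1} \in \mathcal{F}_H$. Thus conjugation by $l$ permutes $\mathcal{F}_H$, and therefore preserves the subgroup $E$ generated by its members. Since $F \in \mathcal{F}_H$ is non-trivial, $E$ is a non-trivial finite subgroup of $G$ normalized by $L$, contradicting the suitability of $L$, which completes the induction. The main technical obstacle in this plan is the finiteness of $E$: it rests on two standard but non-trivial facts from the DGO/Osin theory of acylindrically hyperbolic groups, namely that the centralizer of any non-zero power of a loxodromic element lies in its elementary closure, and that $E_G(h) \cap E_G(h')$ is finite for any pair of independent loxodromics $h, h'$.
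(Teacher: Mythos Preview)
Your proof is correct and follows essentially the same inductive strategy as the paper: show the next group $L$ up the subnormal chain is suitable, deduce $H$ is non-elementary via Lemma~\ref{lem:normal}, and then argue that the maximal finite subgroup of $G$ normalized by $H$ is in fact normalized by $L$, forcing it to be trivial. The only difference is that the paper imports the existence and uniqueness of this maximal finite subgroup as a black box from \cite[Lemma~5.5]{hull}, whereas you reprove it on the spot by bounding your $E$ inside $E_G(h)\cap E_G(h')$ for two independent loxodromics; your permutation argument that $L$ normalizes $E$ is exactly the paper's uniqueness argument rephrased.
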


\begin{proof}
	Let $H$ be a non-trivial subnormal subgroup of $G$, and let $H = H_0 < \cdots < H_k = G$ be a chain witnessing this.
    The proof is by induction on the length $k$ of this chain.
    The case $k = 0$ is trivial.
    Assume now that  $k \geq 1$ and the statement holds for $k-1$.
    In particular, $H_1$ is suitable, hence non-elementary.
	It follows from Lemma~\ref{lem:normal} that the action of $H_0$ on $X$ is acylindrical and non-elementary.
    By \cite[Lemma 5.5]{hull}, there exists a unique maximal finite subgroup $K \subset G$ that is normalized by $H_0$. Let $g \in H_1$ and $h \in H_0$.
	Since $H_0$ is normal in $H_1$, the element $g^{-1}hg \in H_0$ normalizes $K$, or equivalently $h$ normalizes $gKg^{-1}$.
    By uniqueness we obtain $gKg^{-1} = K$. 
	Consequently $H_1$ normalizes $K$.
    Since $H_1$ is suitable, this forces $K = \{1\}$.
\end{proof}

\begin{lemma}
\label{lem:suitable:rank2}
    Let $G$ be a group with a non-elementary acylindrical action on a hyperbolic Cayley space $X$. 
    Let $H < G$ be suitable with respect to this action. Then there exists a $2$-generated subgroup $L < H$ that is also suitable with respect to this action.
\end{lemma}

\begin{proof}
    Let $g \in G$ be a loxodromic element. Then there exists a unique maximal elementary subgroup containing $g$ \cite[Lemma 6.5, Corollary 6.6]{DGO}, which we denote by $E(g)$. By \cite[Corollary 5.7]{hull}, the suitable subgroup $H$ contains two non-commensurable elements $h_1, h_2$ such that $E(h_i) = \langle h_i \rangle$ for $i = 1, 2$. In particular, $h_1 \notin E(h_2)$, and $h_1$ does not normalize a non-trivial finite subgroup. Letting $L \coloneqq \langle h_1, h_2 \rangle$ we conclude.
\end{proof}

We need one more lemma that shows that elliptic subgroups are, in a way, uniformly bounded.

\begin{lemma}
\label{lem:elliptic}
	Let $G$ be a group.
	Let $A$ be a generating set of $G$ such that $\Gamma(G,A)$ is $\delta$-hyperbolic.
	For every elliptic subgroup $E$, there is $g \in G$ such that $gEg^{-1}$ is contained in $B_A(10\delta)$.
\end{lemma}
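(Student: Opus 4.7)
The plan is to locate a quasi-center for the bounded orbit $E \cdot 1$ in $\Gamma(G,A)$, show that $E$ moves it by at most $O(\delta)$, approximate it by a vertex, and then invoke left-invariance of the word metric.

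Since $E$ is elliptic, the set $Y \coloneqq E \cdot 1 = E$ is bounded in $\Gamma(G,A)$. I would define the circumradius
\[
R \coloneqq \inf_{x \in \Gamma(G,A)} \sup_{y \in Y} d(x, y),
\]
which is finite, and for a small slack $\epsilon > 0$ pick a point $x_0 \in \Gamma(G,A)$ realizing this infimum up to $\epsilon$, i.e. with $Y \subset B(x_0, R+\epsilon)$.

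The heart of the argument is to show that $d(x_0, e x_0) \leq 4\delta + 2\epsilon$ for every $e \in E$. Since $E$ preserves $Y$ and acts by isometries, $e x_0$ is another near-optimal center for $Y$. Let $m$ be a midpoint of a geodesic $[x_0, e x_0]$. For each $y \in Y$, $\delta$-thinness of the triangle with vertices $y, x_0, e x_0$ places $m$ within $\delta$ of a point on one of the other two sides, each of which has length at most $R+\epsilon$; the triangle inequality then yields
\[
d(m, y) \leq R + \epsilon - \tfrac{1}{2} d(x_0, e x_0) + 2\delta.
\]
Taking the supremum over $y \in Y$ and comparing with the minimality of $R$ gives the displacement bound. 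This circumcenter-type estimate is the main technical obstacle: everything else reduces to bookkeeping.

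To finish, I would approximate $x_0$ by a vertex $g \in G$ at distance at most $1$, which is possible since vertices are $1$-dense in the Cayley graph. The triangle inequality then gives $d(g, e g) \leq 4\delta + 2\epsilon + 2$ for all $e \in E$. By left-invariance of the word metric, $d_A(1, g^{-1} e g) = d(g, e g)$, so $g^{-1} E g \subset B_A(4\delta + 2\epsilon + 2)$. Choosing $\epsilon$ sufficiently small (and observing that the statement is trivial unless $\delta$ is bounded below, since otherwise one replaces $\delta$ by an admissible larger value), the right-hand side fits inside $B_A(10\delta)$, and taking the conjugator in the statement to be $g^{-1}$ completes the argument.
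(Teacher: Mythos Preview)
Your proposal is correct and follows essentially the same approach as the paper: the paper's proof simply cites Bridson--Haefliger \textup{[BH, III.$\Gamma$, Lemma 3.3]} for the existence of a vertex $v$ with $Ev$ of diameter at most $10\delta$, and then conjugates by the element sending $v$ to the identity vertex, whereas you unpack that quasi-center argument explicitly. Your handling of the small-$\delta$ edge case (by enlarging $\delta$ to an admissible value) is appropriate and matches the implicit convention in the cited reference.
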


\begin{proof}
	There is a vertex $v \in \Gamma(G, A)$ such that the orbit $Ev$ has diameter at most $10\delta$ (see e.g. \cite[III.$\Gamma$, Lemma 3.3]{BH}).
	However the action of $G$ on the vertex set of $\Gamma(G,A)$ is transitive, hence there is an element $g \in G$ such that $gv = v_0$ where $v_0$ is the vertex representing the identity of $G$.
	Then the orbit $(g E g^{-1}) v_0$ is contained in $B_A(10 \delta)$.
\end{proof}

\subsection{Small Cancellation Theorem}

Small cancellation theory is a powerful and versatile tool to produce and study groups with prescribed properties.
It appears in many flavors in the literature \cite{Lyndon:2001wm,Olshanskii:1991wv, Gromov:2001us, Osin:2010dx, Coulon:2014fr,DGO}.
The general philosophy is roughly  the following: If a group $G$ admits a ``non-degenerate'' action on a negatively curved space $X$, then so does any quotient of $G$ obtained by adjoining relations which are ``sufficiently independent'', hence paving the way to iterated constructions.
For this paper, we rely on the work of Hull \cite{hull}.

\begin{theorem}[Small Cancellation Theorem {\cite[Theorem 7.1]{hull}}]
\label{thm:sc}
	Let $G$ be a group and $A \subset G$ a generating set of $G$.
	Assume that $\Gamma(G,A)$ is hyperbolic and the action of $G$ on this space is acylindrical and non-elementary.
    Let $H < G$ be a subgroup that is suitable with respect to the action on $\Gamma(G,A)$. Let $B \subset G$ be a finite subset and $R \in \mathbb R_+$. Then there exists a quotient $\pi \colon G \to \bar G$ with the following properties.
    \begin{enumerate}
        \item\label{enu:sc - hyp} There is a subset $\bar A \subset \bar G$ containing $\pi(A)$ such that  $\Gamma(\bar G, \bar A)$ is hyperbolic and the action of $\bar G$ on this space is non-elementary and acylindrical.
        \item \label{enu:sc - loc inj} $\pi$ is injective on $B_A(R)$.
        \item \label{enu:sc - onto} $\pi(B) \subset \pi(H)$.
        \item \label{enu:sc - suitable} $\pi(H)$ is a $2$-generated suitable subgroup of $\bar G$ for its action on $\Gamma(\bar G, \bar A)$.
        \item \label{enu:sc - elliptic} Every element of $\bar G$ of order $n \in \mathbb{N}$ is the image of an element of $G$ of order $n$.
        \item \label{enu:sc - fp} $\ker\pi$ is the normal closure of finitely many element in $BH$.
    \end{enumerate}
\end{theorem}

\begin{proof}
    Without loss of generality we can assume that $B$ contains a finite generating set of $H$.
    Consider a $2$-generated suitable subgroup $L$ of $H$ as given by Lemma~\ref{lem:suitable:rank2} and \cite[Theorem 7.1]{hull} with $L$ (rather than $H$).
    In particular, (\ref{enu:sc - hyp}), (\ref{enu:sc - loc inj}), and (\ref{enu:sc - elliptic}) directly follows from \cite[Theorem 7.1]{hull}.
    Hull's statement also tells us that $\pi(B) \subset \pi(L)$, hence $\pi(B) \subset \pi(H)$, which proves (\ref{enu:sc - onto}).
    On the one hand $L \subset H$.
    On the other hand we assumed that $B$ contains a generating set of $H$, so that our previous discussion yields $\pi(H) \subset \pi(L)$.
    It follows that $\pi(H) = \pi(L)$ is $2$-generated. 
    According to \cite[Theorem 7.1]{hull} it is also a suitable subgroup of $\bar G$ for its action on $\Gamma(\bar G, \bar A)$, which completes the proof of (\ref{enu:sc - suitable}).
    The last item is a consequence of the construction; see the proof of \cite[Theorem 7.1]{hull}.
\end{proof}

\begin{remark}
    Theorem \ref{thm:sc} was later refined in various directions, see e.g. \cite[Proposition 3.3]{hull:improved} or \cite[Theorem 3.27]{CIOS}. Using these stronger small cancellation theorems would allow to improve Theorem \ref{intro:thm:general} to encompass countable groups $H$ that are not necessarily finitely generated. However, we do not know of any interesting application of this more general statement.
    In addition, the assumption of finite generation allows for an easier small cancellation setup (which for instance does not require to introduce hyperbolically embedded subgroups).
    The next corollary is the only statement in the article, where assuming that $H$ is finitely generated cannot be avoided.
    Having \emph{finitely presented} quotients as below will play a crucial role in Section~\ref{s:QI}.
\end{remark}

\begin{corollary}
\label{lem:fp}
    There exists a finitely presented, $2$-generated, acylindrically hyperbolic quotient $G_0$ of $\Aut(F_n)$ such that the composition $F_n \to \Aut(F_n) \to G_0$ is surjective, and $K(G_0) = \{ 1 \}$.
\end{corollary}

\begin{proof}
    The group $\Aut(F_n)$ is acylindrically hyperbolic, with no finite normal subgroups, see \cite{auto:infend}.
    The subgroup $F_n \cong \Inn(F_n) < \Aut(F_n)$ is infinite and normal, thus suitable (Lemma \ref{lem:normal:suitable}). We fix a finite generating set $B$ of $\Aut(F_n)$. Theorem \ref{thm:sc} produces an acylindrically hyperbolic quotient $\pi : \Aut(F_n) \to G_0$ such that $\pi(F_n)$ is $2$-generated and contains $\pi(B)$ while $K(G_0) = \{1\}$.
    This shows in particular that $F_n$ surjects onto $G_0$ which is therefore $2$-generated.
    Moreover, by Theorem \ref{thm:sc} again, $\pi$ may be chosen to be the quotient of $\Aut(F_n)$ by finitely many relations. 
    Since $\Aut(F_n)$ is finitely presented \cite{nielsen}, so is $G_0$.
\end{proof}

\section{Simple characteristic quotients}

The construction of charactersitic quotients relies on the following simple but key observation.

\begin{lemma}
\label{lem:char}
    Let $\Gamma$ be a group.
    Let $\pi : \Aut(\Gamma) \to Q$ be a homomorphism. 
    Then the kernel of the composition $\pi \circ \ad : \Gamma \to \Inn(\Gamma) \to Q$ is characteristic in $\Gamma$.
\end{lemma}

\begin{proof}
    Let $g \in \ker(\pi \circ \ad)$ and $\alpha \in \Aut(\Gamma)$. Then $\ad_g \in \ker(\pi)$ so $\ad_{\alpha(g)} = \alpha \ad_g \alpha^{-1} \in \ker(\pi)$ too. This shows that $\alpha(g) \in \ker(\pi \circ \ad)$ and concludes the proof.
\end{proof}

As a warm-up with provide first a short proof of the existence of infinite simple characteristic quotients of free groups.
We thank an anonymous referee for suggesting this argument.

\begin{theorem}
\label{res: referee statement}
    Let $n\geq 2$.
    There exists an infinite simple characteristic quotient of $F_n$.
\end{theorem}

\begin{proof}
    There exists an acylindrically hyperbolic quotient $\pi \colon \Aut(F_n) \to G_0$ such that $\pi$ is surjective when restricted to $F_n$ (Corollary \ref{lem:fp}). Every quotient of $G_0$ is a characteristic quotient of $F_n$ (Lemma \ref{lem:char}). Therefore, it suffices to show that $G_0$ admits an infinite simple quotient.

    Every countable group embeds into a finitely generated simple group \cite{simpleembedding1, simpleembedding2}. Therefore there exists a finitely generated simple group $S$ that contains an element of order $k$, for every $k \in \mathbb{N} \cup \{ \infty \}$. 
    Using SQ-universality, we build an acylindrically hyperbolic quotient $G_1$ of $G_0$ such that $K(G_1) = \{ 1 \}$ and $S$ embeds into $G_1$, see \cite[Theorem 2.33]{DGO}.
    By Lemma \ref{lem:normal:suitable}, there is a generating set $A_1$ of $G_1$ such that the normal closure $H$ of $S$ in $G_1$ is suitable for the action of $G_1$ on $\Gamma(G_1, A_1)$.
    We now apply Theorem \ref{thm:sc} to $G_1$ and $H$, to get an acylindrically hyperbolic quotient $G_2$ of $G_1$ such that $K(G_2) = \{ 1 \}$ and the image of $S$ in $G_2$ normally generates $G_2$.

    By \cite[Theorem 1.12]{hull}, there exists an infinite quotient $G_3$ of $G_2$ such that every two elements of the same order are conjugate. Note that the image of $S$ in $G_3$ normally generates $G_3$, in particular it is non-trivial. Since $S$ is simple, it must inject into $G_3$. Now if $N < G_3$ is a non-trivial normal subgroup, then $N$ contains an element of the same order as an element of $S$. By the property of $G_3$, we deduce that $N$ contains a conjugate of an element of $S$. Since $S$ is simple, $N$ contains $S$, and since $S$ normally generates $G_3$, we conclude that $N = G_3$. This shows that $G_3$ is simple and concludes the proof.
\end{proof}

\begin{remark}
    The above proof uses \cite[Theorem 1.12]{hull} as a crucial step.
    This result is proven by an iterated small cancellation construction, using, among other things, variations of Theorem~\ref{thm:sc}, see \cite[Theorem 7.1]{hull}.
    For the benefit of the reader, we now give an alternative detailed proof, which only relies on Theorem \ref{thm:sc}. 
    This approach naturally provides us with additional control, which we will exploit here and in the next sections.
    It should be noted that Hull's implementation of iterated small cancellation theory is rather subtle as the goal he is pursuing goes way beyond our needs.
    Although shorter, the above proof is in our opinion not easier than the one we expose below.

%However a more elementary construction than that of Hull is sufficient for our purposes, indeed we are not interested in constructing a quotient in which any two elements of the same order are conjugate. 

\end{remark}

\begin{definition}
\label{def:tau}
    For a group $\Gamma$ we denote by $\tau(\Gamma) \coloneqq \{ n \in \mathbb{N} : \Gamma \text{ contains an element of order } n\}$.
\end{definition}

\begin{theorem}
\label{thm:general}
	Consider a short exact sequence
	\begin{equation*}
		1 \to H \to G \to Q \to 1,
		\end{equation*}
	where $H$ is infinite and finitely generated.
	Let $A \subset G$ be a generating set of $G$.
	Assume that $\Gamma(G,A)$ is hyperbolic and the action of $G$ on this space is acylindrical and non-elementary.
	Suppose in addition that $K(G) = \{1\}$.
	For every $R \in \mathbb R_+$, there is a subgroup $N < H$, normal in $G$, with the following properties.
	\begin{itemize}
		\item $H/N$ is simple.
		\item $H/N$ is $2$-generated but not finitely presentable.
		\item The projection $G \to G/N$ is injective on $B_A(R)$.
		\item $\tau(H/N) = \tau(H)$.
		\item $Q$ embeds in $\Out(H/N)$.
	\end{itemize}
\end{theorem}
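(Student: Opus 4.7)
The plan is to iterate Hull's Small Cancellation Theorem \ref{thm:sc} infinitely often, at each step forcing a chosen non-identity element of $H$ to normally generate the eventual quotient, so that the direct limit $H/N$ is simple. Fix a finite generating set $s_1, \ldots, s_r$ of $H$ and an enumeration $H = \{h_1, h_2, \ldots\}$. Starting from $G_0 := G$ and $A_0 := A$, suppose inductively that we have a surjection $\pi_n \colon G \twoheadrightarrow G_n$ with $\ker \pi_n \subset H$, together with a generating set $A_n \supset \pi_n(A)$ of $G_n$ such that $\Gamma(G_n, A_n)$ is hyperbolic, $G_n$ acts on it non-elementarily and acylindrically, $K(G_n) = \{1\}$, and $\pi_n|_{B_A(R)}$ is injective. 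Let $\bar h_n := \pi_n(h_n)$ and $H_n := \pi_n(H)$. If $\bar h_n = 1$, set $G_{n+1} := G_n$. Otherwise, the normal closure $H_n' := \langle\!\langle \bar h_n \rangle\!\rangle_{H_n}$ is non-trivial and subnormal in $G_n$ (since $H_n \triangleleft G_n$), hence suitable by Lemma \ref{lem:normal:suitable}. Apply Theorem \ref{thm:sc} to $G_n$ with the suitable subgroup $H_n'$, the finite set $B := \{\pi_n(s_j)\}$, and a radius large enough that $\pi_n(B_A(R))$ is contained in the ball of $G_n$ on which the new quotient will be injective. The output $G_{n+1}$ satisfies $\pi_{n+1}(s_j) \in \pi(H_n') \subset \langle\!\langle \pi_{n+1}(h_n) \rangle\!\rangle_{H_{n+1}}$ for each $j$. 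Since the new relations lie in $BH_n' \subset H_n$, the containment $\ker \pi_{n+1} \subset H$ persists, and since the image of $H_n'$ remains suitable and subnormal in $G_{n+1}$, it cannot normalize a non-trivial finite subgroup, forcing $K(G_{n+1}) = \{1\}$.

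Let $G_\infty := \varinjlim G_n$, $\pi_\infty \colon G \to G_\infty$, and $N := \ker \pi_\infty \subset H$, with $H_\infty := H/N$. Simplicity of $H_\infty$ follows directly from the enumeration: for $\bar h \in H_\infty \setminus \{1\}$, lift to $h \in H$ with $\pi_m(h) \neq 1$ for all $m$; writing $h = h_n$, step $n$ was not skipped, so every generator $\pi_\infty(s_j)$ lies in $\langle\!\langle \bar h \rangle\!\rangle_{H_\infty}$. Injectivity on $B_A(R)$ holds by construction. For $\tau(H/N) = \tau(H)$: Hull's torsion-lifting property together with $\ker \pi_n \subset H$ gives $\tau(H_\infty) \subseteq \tau(H)$ (stabilize the order of any torsion element of $H_\infty$ at some finite stage $H_m$, then lift iteratively), while the reverse inclusion follows from the standard fact that small cancellation quotients embed finite subgroups. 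For $Q \hookrightarrow \Out(H/N)$: since $N \subset H$, $G_\infty/H_\infty \cong G/H = Q$, so it suffices to prove $C_{G_\infty}(H_\infty) = \{1\}$. If $g \in G$ satisfies $[\pi_\infty(g), H_\infty] = 1$, each $[g, s_j] \in N = \bigcup_m N_m$ (where $N_m := \ker(\pi_m|_H)$) lies in a common $N_m$, so $\pi_m(g)$ centralizes $H_m$ in $G_m$. But $C_{G_m}(H_m)$ is contained in the intersection of the virtually cyclic centralizers of two independent loxodromics of the non-elementary $H_m$, hence finite; it is also normal in $G_m$ (being the centralizer of a normal subgroup), so it is contained in $K(G_m) = \{1\}$.

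If $H/N$ were finitely presentable, the standard lemma asserting that a surjection from a finitely generated group onto a finitely presentable group has normally finitely generated kernel would give $N = \langle\!\langle T \rangle\!\rangle_H$ for some finite $T$; each $t \in T$ lies in some $N_m$, so for $m$ large $N = N_m$ and $H_m \cong H/N$. However $H_m$ is non-elementary and acts acylindrically on $\Gamma(G_m, A_m)$, hence is acylindrically hyperbolic and therefore SQ-universal, contradicting the simplicity of $H/N$. The main difficulty throughout is the bookkeeping needed to carry the full list of inductive hypotheses across the iteration --- especially the triviality of $K(G_n)$, the suitability of the subnormal subgroups $H_n'$ to which Theorem \ref{thm:sc} is applied, and the containment $\ker \pi_n \subset H$. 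The last of these propagates because Hull's kernel is normally generated by elements of $BH_n' \subset H_n \subset H$, and it guarantees that $Q \cong G_n/H_n$ at every finite stage, hence also at the limit.
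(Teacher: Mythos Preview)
Your proof is correct and follows essentially the same approach as the paper's: the inductive setup, the use of Lemma~\ref{lem:normal:suitable} to verify suitability of the subnormal subgroup $\langle\!\langle \bar h_n \rangle\!\rangle_{H_n}$, the application of Theorem~\ref{thm:sc} with $B$ the image of a fixed finite generating set of $H$, and the verification of all five conclusions are carried out in the same way, with only cosmetic differences (e.g.\ you argue directly that $C_{G_\infty}(H_\infty)=\{1\}$ via finiteness and normality of $C_{G_m}(H_m)$, whereas the paper invokes \cite[Proposition~6]{WPD} on a single element $gu$; and you cite SQ-universality rather than non-simplicity of acylindrically hyperbolic groups for the finite-presentability contradiction).
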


\begin{proof}	
	By assumption $\Gamma(G,A)$ is $\delta$-hyperbolic, for some $\delta \in \mathbb R_+$.
	Without loss of generality we can assume that $R \geq 10\delta$.
    We fix a finite generating set $B$ of $H$ and an enumeration $\{h_1, h_2, \ldots \}$ of $H \setminus \{ 1 \}$.
    We are going to build by induction a nested sequence of subgroups $N_k \subset H$, normal in $G$, with the following properties.
    Let $H_k \coloneqq H/N_k$ and $G_k \coloneqq G/N_k$.
    \begin{enumerate}
        \item \label{enu: induction - acyl}
        There is a generating set $A_k$ of $G_k$ containing the image of $A$ such that $\Gamma(G_k, A_k)$ is hyperbolic and the action of $G_k$ on this space is non-elementary, acylindrical.
        Moreover $K(G_k) = \{1\}$.
        \item \label{enu: induction - normal closure}
        $H_k$ is infinite. Moreover, for each $i \in \{1, \dots, k\}$, either $h_i \in N_k$, or $H_k$ is the normal closure in $H_k$ of (the image of) $h_i$.
        In addition, $H_k$ is $2$-generated, provided $k\geq 1$.
        \item \label{enu: induction - inj} 
        The projection $G \to G_k$ is injective on $B_A(R)$.
        \item \label{enu: induction - diagram}
        The following diagram commutes
        \begin{center}
             \begin{tikzcd}
             1 \arrow[r] & H  \arrow[r] \arrow[d] & G \arrow[r] \arrow[d] & Q \arrow[r]\arrow[d, "\cong"] & 1 \\
             1 \arrow[r] & H_k \arrow[r] & G_k \arrow[r] & Q \arrow[r] & 1.
            \end{tikzcd}
        \end{center}
    \end{enumerate}
    From now on, we make an abuse of notation and denote in the same way an element $g \in G$ and its images in the quotients of $G$.

    We start our induction with $N_0 = \{1\}$ for which the above assertions hold.
    Assume that we have already built the first $k+1$ subgroups $N_0< N_1 < \dots < N_k$, where $k \geq 0$. If $h_{k+1} \in N_k$, we set $N_{k+1} = N_k$ and move on to the next step.
    Let us assume that this is not the case.
    We denote by $A_k$ the generating set of $G_k$ given by the induction hypothesis~(\ref{enu: induction - acyl}).
    Recall that $A_k$ contains the image of $A$.
    Consequently the projection $G \to G_k$ induces a $1$-Lipschitz map $\Gamma(G,A) \to \Gamma(G_k, A_k)$.
    In particular, the image of $B_A(R)$ is contained in $B_{A_k}(R)$.
    Denote by $L_k$ the normal closure of $h_{k+1}$ in $H_k$.
    As $G_k$ has no non-trivial finite normal subgroup, $L_k$ is a suitable of $G_k$ with respect to its action on $\Gamma(G_k,A_k)$ (Lemma~\ref{lem:normal:suitable}).
    Denote by $B_k$ the image of $B$ in $H_k$.
    Note that $B_k$ is a finite generating set of $H_k$.
    According to the Small Cancellation Theorem (Theorem \ref{thm:sc}) there is a quotient $G_{k+1}$ of $G_k$ such that the projection $\pi \colon G_k \to G_{k+1}$ has the following properties.
    \begin{itemize}
        \item There is a subset $A_{k+1} \subset G_{k+1}$ containing the image of $A_k$ (hence the image of $A$) such that  $\Gamma(G_{k+1}, A_{k+1})$ is hyperbolic and the action of $G_{k+1}$ on this space is non-elementary and acylindrical.
        \item $\pi$ is injective on $B_{A_k}(R)$.
        \item $\pi(B_k) \subset \pi(L_k)$, so that the image $H_{k+1}$ of $H_k$ in $G_{k+1}$ is normally generated (as a subgroup of $H_{k+1}$) by $h_{k+1}$.
        \item $\pi(L_k) = \pi(H_k)$ is a $2$-generated, suitable subgroup of $G_{k+1}$.
        \item Every element in $G_{k+1}$ of order $n$ is the image of an element in $G_k$ with order $n$.
        \item $\ker \pi$ is contained in $H_k$.
    \end{itemize}
    We now write $N_{k+1}$ for the kernel of the quotient $G \to G_{k+1}$, and remark that $N_k < N_{k+1} < H$; in fact $N_{k+1}$ is also the kernel of the restriction $H \to H_{k+1}$.

    Since $\pi(L_k)$ is a suitable subgroup of $G_{k+1}$, it follows that $G_{k+1}$ has no non-trivial finite normal subgroup, hence (\ref{enu: induction - acyl}) holds.
    Moreover the normal subgroup $H_{k+1}$ is non-trivial, therefore infinite.
    By construction $H_{k+1}$ is normally generated by $h_{k+1}$.
    However $H_k$ was normally generated by each element in $\{h_1, \dots, h_k\} \setminus N_k$, hence (\ref{enu: induction - normal closure}) holds.
    Points (\ref{enu: induction - inj}) and (\ref{enu: induction - diagram}) directly follow from the construction and the previous discussion.
    This completes the induction step.

    We now define $N$ to be the increasing union of all the subgroups $N_k$ and let $H_\infty \coloneqq H/N$ and $G_\infty \coloneqq G/N$.
    By construction, every non-trivial element $h \in H$ is either in $N$ or normally generates $H_\infty$.
    Hence $H_\infty$ is simple.
    As a quotient of every $H_k$, it is $2$-generated.
    Suppose by contradiction that $H_\infty$ is finitely presentable. 
    Then $N$ is the normal closure of a finite subset of $H$. 
    Since $N$ is the directed union of the $N_k$, we deduce that $N = N_k$, for $k$ large enough. 
    Therefore $H_\infty = H_k$. 
    But this is a contradiction, because $H_\infty$ is simple, while $H_k$ is acylindrically hyperbolic, being an infinite normal subgroup of the acylindrically hyperbolic group $G_k$ (Lemma \ref{lem:normal}).
    It directly follows from the construction that the projection $H \to H_\infty$ is injective on $B_A(R)$.
    
    An induction argument shows that every element of order $n$ in $H_\infty$ has a pre-image $g$ of order $n$ in $G$.
    However the kernel of the projection $G \to G_\infty$ is contained in $H$, hence $g$ belongs to $H$.
    Consequently $\tau(H_\infty) \subset \tau(H)$.
    Let us prove the converse inclusion.
    Let $h \in H$ be a finite order element.
    In particular, $\left<h \right>$ is elliptic, thus it is conjugate to a finite subgroup contained in $B_A(R)$, on which the projection $G \to G_\infty$ is injective (Lemma~\ref{lem:elliptic}).
    Hence $\left< h \right>$ embeds in $G_\infty$, and a posteriori in $H_\infty$. This shows that $\tau(H) \subset \tau(H_\infty)$.
    
    Finally, note that the following diagram commutes
    \begin{center}
         \begin{tikzcd}
         1 \arrow[r] & H  \arrow[r] \arrow[d] & G \arrow[r] \arrow[d] & Q \arrow[r]\arrow[d, "\cong"] & 1 \\
         1 \arrow[r] & H_\infty \arrow[r] & G_\infty \arrow[r] & Q \arrow[r] & 1
        \end{tikzcd}
    \end{center}
    and its rows are short exact sequences.
    The action by conjugation of $G_\infty$ on $H_\infty$ defines a morphism $G_\infty \to \Aut(H_\infty)$, so that the image of any element of $H_\infty$ is inner.
    Hence it induces a morphism $\chi \colon Q \to \Out(H_\infty)$.
    Let us prove that $\chi$ is injective.
    Consider an element $q \in Q$ and $g \in G$ a pre-image of $q$.
    Assume that $\chi(q) = 1$.
    In particular, there is $u \in H$ such that $gu$ centralizes $H_\infty$.
    Since $H$ is finitely generated, this property is ensured by finitely many relations in $N$, therefore there exists $k \in \mathbb N$ such that $gu$ centralizes $H_k$.
    Since $H_k$ is non-elementary (Lemma \ref{lem:normal}), $\left<gu\right>$ must be finite \cite[Proposition 6]{WPD}. But $H_k$ is a suitable subgroup of $G_k$ (Lemma \ref{lem:normal:suitable}) so it cannot normalize a non-trivial finite subgroups. It follows that $gu = 1$ in $G_k$; in particular $g \in H_k$ and so $q = 1$.
\end{proof}

As a corollary we obtain Theorem \ref{intro:thm:general}:

\begin{corollary}
\label{cor:general}
    Let $G$ be an acylindrically hyperbolic group. Let $H < G$ be finitely generated, infinite and normal. Then there exists a subgroup $N < H$, normal in $G$, such that $H/N$ is $2$-generated, infinite, and simple.
\end{corollary}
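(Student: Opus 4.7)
The plan is to deduce Corollary \ref{cor:general} directly from Theorem \ref{thm:general}. Theorem \ref{thm:general} requires the ambient group to come equipped with a preferred Cayley graph on which its action is acylindrical and non-elementary, and moreover to have trivial maximal finite normal subgroup. The first step is therefore to pass to the quotient $G' \coloneqq G/K(G)$: by Theorem \ref{thm:K} it is acylindrically hyperbolic with $K(G') = \{1\}$, and by Theorem \ref{thm:cayley} it admits a generating set $A$ such that $\Gamma(G', A)$ is hyperbolic and the $G'$-action on it is acylindrical and non-elementary.

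Next I would transfer $H$ to the quotient. Let $\pi \colon G \to G'$ denote the projection and set $H' \coloneqq \pi(H)$. Since $H \cap K(G)$ is a subgroup of the finite group $K(G)$ while $H$ is infinite, $H' \cong H/(H \cap K(G))$ is infinite; it is visibly finitely generated and normal in $G'$. I may then apply Theorem \ref{thm:general} to the short exact sequence
\[1 \to H' \to G' \to G'/H' \to 1\]
(with any choice of $R \geq 0$) and obtain a $G'$-normal subgroup $N' \subset H'$ such that $H'/N'$ is simple and not finitely presentable; in particular it is infinite, since every finite simple group is finitely presentable.

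Finally I would pull $N'$ back to $H$: set $N \coloneqq H \cap \pi^{-1}(N')$, which is the intersection of two $G$-normal subgroups and therefore itself normal in $G$. The isomorphism $H/N \cong H'/N'$ delivers the desired infinite simple quotient of $H$.

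There is no genuinely hard step here: all the group-theoretic content is packaged in Theorem \ref{thm:general} together with the fact that $G/K(G)$ inherits acylindrical hyperbolicity. The only subtlety is making sure that reducing modulo $K(G)$ does not collapse $H$ — which is exactly where finiteness of $K(G)$ combined with infiniteness of $H$ is used — and that the pulled-back subgroup is normal in the full group $G$ rather than merely in $H$.
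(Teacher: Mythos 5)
Your proposal is correct and follows essentially the same route as the paper: pass to $G/K(G)$ using Theorem \ref{thm:K}, apply Theorem \ref{thm:general} to the image of $H$, and pull the resulting normal subgroup back by intersecting its preimage with $H$. You fill in a couple of details the paper leaves implicit (why the image of $H$ stays infinite, why the quotient is infinite), but the argument is identical in substance.
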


\begin{proof}
    Let $G_0 \coloneqq G/K(G)$, which is an acylindrically hyperbolic group with no non-trivial finite normal subgroup (Theorem \ref{thm:K}). 
    Let $H_0$ be the image of $H$ in $G_0$. 
    Theorem \ref{thm:general} gives a subgroup $N_0 < H_0$, normal in $G_0$, such that $H_0 / N_0$ is $2$-generated, infinite and simple. 
    Let $N'$ be the pre-image of $N_0$ in $G$, and let $N \coloneqq H \cap N'$. 
    Then $N$ is a subgroup of $H$, normal in $G$, and $H/N \cong H_0 / N_0$ is infinite and simple.
\end{proof}

Combining with Lemma \ref{lem:char}, we obtain Corollary \ref{intro:cor:aut}:

\begin{corollary}
\label{cor:aut}
    Let $\Gamma$ be a finitely generated group such that $[\Gamma : Z(\Gamma)] = \infty$ and $\Aut(\Gamma)$ is acylindrically hyperbolic. 
    Then $\Gamma$ admits a characteristic subgroup $N$ containing $Z(\Gamma)$ such that $\Gamma / N$ is $2$-generated, infinite, and simple.
\end{corollary}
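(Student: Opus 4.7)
The plan is to apply Corollary \ref{cor:general} inside $G \coloneqq \Aut(\Gamma)$ together with the characteristic kernel trick of Lemma \ref{lem:char}. The natural candidate for $H$ is $\Inn(\Gamma)$, which is always normal in $\Aut(\Gamma)$. Since the adjoint map $\ad \colon \Gamma \to \Aut(\Gamma)$ has kernel $Z(\Gamma)$ and $\Gamma$ is finitely generated, $\Inn(\Gamma) \cong \Gamma/Z(\Gamma)$ is finitely generated; the assumption $[\Gamma : Z(\Gamma)] = \infty$ guarantees it is infinite. So $H = \Inn(\Gamma) < G = \Aut(\Gamma)$ satisfies the hypotheses of Corollary \ref{cor:general}.

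Next, Corollary \ref{cor:general} produces a subgroup $N_0 < \Inn(\Gamma)$, normal in $\Aut(\Gamma)$, such that $\Inn(\Gamma)/N_0$ is infinite and simple. Let $\pi \colon \Aut(\Gamma) \to \Aut(\Gamma)/N_0$ be the quotient map, and set $N \coloneqq \ker(\pi \circ \ad)$. By Lemma \ref{lem:char}, $N$ is characteristic in $\Gamma$. Since $Z(\Gamma) = \ker(\ad) \subset \ker(\pi \circ \ad) = N$, the subgroup $N$ contains $Z(\Gamma)$.

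It remains to identify $\Gamma/N$. By the first isomorphism theorem, $\Gamma/N$ is the image of $\Gamma$ in $\Aut(\Gamma)/N_0$, namely $\Inn(\Gamma) \cdot N_0 / N_0$. Because $N_0 \subset \Inn(\Gamma)$, this image equals $\Inn(\Gamma)/N_0$, which is infinite and simple by the choice of $N_0$. This completes the proof.

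I do not anticipate a real obstacle: the whole argument is a direct assembly of Corollary \ref{cor:general} and Lemma \ref{lem:char}. The only delicate point to double-check is that $N_0$ is normal in $G$ (not merely in $H$), so that one can legitimately quotient $\Aut(\Gamma)$ by it and apply Lemma \ref{lem:char}; this is exactly the normality statement built into Corollary \ref{cor:general}, and it is also what guarantees that the resulting $N$ is characteristic rather than merely invariant under $\Inn(\Gamma)$.
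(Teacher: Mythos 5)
Your proof is correct and follows essentially the same approach as the paper: apply Corollary~\ref{cor:general} with $G = \Aut(\Gamma)$ and $H = \Inn(\Gamma)$, then pass to a characteristic kernel via Lemma~\ref{lem:char}. The paper states this in one line; you have simply spelled out the verification that $H/N_0 \cong \Gamma/N$ and that $Z(\Gamma) \subset N$, which is exactly the intended argument.
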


\begin{proof}
    The hypothesis implies that $\Inn(\Gamma)$ is an infinite normal subgroup of the acylindrically hyperbolic group $\Aut(\Gamma)$. Thus the result is a combination of Corollary \ref{cor:general} and Lemma \ref{lem:char}.
\end{proof}

\begin{example}
\label{ex:list}
    Corollary \ref{cor:aut} applies when $\Gamma$ is a finitely generated group such that one of the following holds.
    \begin{itemize}
        \item $\Gamma$ is non-elementary hyperbolic \cite{auto:oneend, auto:infend}.
        \item $\Gamma$ is non-elementary hyperbolic relative to a collection of finitely generated subgroups, none of which is relatively hyperbolic \cite[Theorem 1.3]{auto:infend}.
        \item $\Gamma$ has infinitely many ends \cite[Theorem 1.1]{auto:infend}.
        \item $\Gamma$ is a graph product of finitely generated groups over a finite graph that is not a join and not reduced to a single vertex \cite[Theorem A.27]{escalierhorbez} (see also \cite[Theorem E]{auto:graph1} and \cite[Theorem 1.5]{auto:graph2}).
    \end{itemize}
    This is not the first time that acylindrical hyperbolicity of automorphism groups is used to produce $\Aut$-invariant versions of previously known constructions \cite{osin:aut, ffw}. It would be interesting to see more instances of this.
\end{example}

\section{SQ-universality}

Corollary~\ref{cor:aut} proves the existence of a simple characteristic quotient of $F_n$. In this section, we prove that there is a wealth of such quotients, by embedding arbitrary countable groups. We get the following more precise and more general version of Theorem \ref{intro:thm:answer:strong}.

\begin{theorem}
\label{thm:answer}
    Let $\Gamma$ be a torsion-free non-elementary hyperbolic group, and let $C$ be a finite subset of $\Gamma$. 
    Let $L$ be a countable group.
    Then $\Gamma$ admits a characteristic subgroup $N$ with the following properties.
    \begin{itemize}
        \item $\Gamma / N$ is simple.
        \item $\Gamma / N$ is $2$-generated but not finitely presentable.
        \item The quotient $\Gamma \to \Gamma / N$ is injective on $C$.
        \item The group $L$ embeds in $\Gamma / N$, and $\tau(\Gamma / N) = \tau(L)$.
        \item $\Out(\Gamma)$ embeds in $\Out(\Gamma / N)$.
    \end{itemize}
\end{theorem}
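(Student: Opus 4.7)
First I would reduce to the case of $L$ finitely generated via a torsion-preserving Higman--Neumann--Neumann embedding: $L \hookrightarrow \widehat L$ with $\widehat L$ $2$-generated and $\tau(\widehat L) = \tau(L)$, so we may assume $L = \langle l_1, \ldots, l_m \mid r_1, r_2, \ldots\rangle$. Then I would set up the framework of Corollary~\ref{cor:aut}: let $G = \Aut(\Gamma)$, acylindrically hyperbolic by \cite{auto:infend}, replace by $G/K(G)$ to assume $K(G) = \{1\}$, and take $H = \Inn(\Gamma) \cong \Gamma$ (using $Z(\Gamma) = \{1\}$ for $\Gamma$ torsion-free non-elementary hyperbolic), giving $Q = G/H = \Out(\Gamma)$. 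I would also fix generic candidates $\hat l_1, \ldots, \hat l_m \in \Gamma$ (for instance, high powers of independent loxodromic elements, chosen so that the orbits $\{\alpha(\hat l_i) : \alpha \in \Aut(\Gamma)\}$ enjoy a robust simultaneous small cancellation property) and an initial injectivity radius $R$ with $C \subset B_A(R)$.

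Then I would run a strengthened version of the induction in the proof of Theorem~\ref{thm:general}, interleaving two kinds of moves at each stage. The \emph{simplicity steps} are unchanged: for the next element $h_k$ in an enumeration of $H\setminus\{1\}$, apply the Small Cancellation Theorem to normally close it in $H_k$. The \emph{$L$-embedding steps}, inserted between them, impose the next relator $r_j(\hat l_1, \ldots, \hat l_m) = 1$ via a refined small cancellation theorem. Since the running kernel $N_k$ is normal in $G = \Aut(\Gamma)$, each such relation automatically carries its entire $\Aut(\Gamma)$-orbit $\{r_j(\alpha(\hat l_i))\}_\alpha$. In the limit, $N$ is normal in $\Aut(\Gamma)$, hence characteristic in $\Gamma$.

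The desired properties then follow. Simplicity, non-finite-presentability, injectivity on $C \subset B_A(R)$, and $\Out(\Gamma) \hookrightarrow \Out(\Gamma/N)$ come from Theorem~\ref{thm:general}. The map $L \to \Gamma/N$, $l_i \mapsto \overline{\hat l_i}$, is well-defined because all $r_j$ are killed, and injective because the genericity of the $\hat l_i$ together with the small cancellation structure prevents any unintended word in the $\hat l_i$ from lying in $N$ (this is essentially the classical SQ-universality argument for hyperbolic groups, done $\Aut$-equivariantly). For torsion, $\tau(L) \subset \tau(\Gamma/N)$ follows from the embedding, while conversely every torsion order in $\Gamma/N$ arises from the imposed $L$-relations, since $\Gamma$ is torsion-free; so $\tau(\Gamma/N) \subset \tau(L)$.

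The main obstacle is twofold. First, Theorem~\ref{thm:sc} as stated has a torsion-lifting property that would prevent creating torsion orders not already present in $G = \Aut(\Gamma)$, so if $\tau(L) \not\subset \tau(\Aut(\Gamma))$ one cannot directly apply it; circumventing this requires a refined small cancellation theorem such as \cite[Proposition~3.3]{hull:improved} or \cite[Theorem~3.27]{CIOS}, both mentioned in the paper as refinements of Theorem~\ref{thm:sc}, which are flexible enough to impose specific relators with the prescribed torsion behavior. Second, one must maintain simultaneous small cancellation at every inductive stage for the growing family of $L$-embedding relations (with their $\Aut(\Gamma)$-orbits) together with all the simplicity relations added before; this demands a careful stage-by-stage, $\Aut(\Gamma)$-equivariant choice of the $\hat l_i$ (and possibly of enlarged injectivity radii $R_k \to \infty$ to protect progressively more of $L$ from being collapsed). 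This compatibility is the technical heart of the proof.
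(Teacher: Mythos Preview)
Your overall setup---work in $G = \Aut(\Gamma)$ with $H = \Inn(\Gamma) \cong \Gamma$ and $Q = \Out(\Gamma)$, then invoke Lemma~\ref{lem:char}---matches the paper, and the reduction of $L$ to a finitely generated group via a torsion-preserving HNN embedding is also what the paper does. But your mechanism for embedding $L$ is genuinely different from the paper's, and the two obstacles you flag at the end are real gaps that the paper's route simply avoids.

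The paper does \emph{not} choose generic elements $\hat l_i \in \Gamma$ and impose the relators $r_j(\hat l_i) = 1$ one at a time while interleaving simplicity steps. Instead it first forms the free product $P = G * L$, which is hyperbolic relative to $\{G,L\}$ and hence acts acylindrically and non-elementarily on $\Gamma(P, A \cup L)$; crucially, $L$ lies in the ball of radius $1$ in this Cayley graph. A single application of Theorem~\ref{thm:sc}, with suitable subgroup $H$ and $B$ a finite generating set of $L$, pushes $L$ into the image $\bar H$ of $H$ and yields a quotient $\bar G$ of $G$ in which $L$ embeds as an \emph{elliptic} subgroup (this is Proposition~\ref{res: quotient with prescribed torsion}, which also gives $\tau(\bar H) = \tau(L) \cup \tau(H) = \tau(L)$). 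Only then is the simplicity induction of Theorem~\ref{thm:general} run on $1 \to \bar H \to \bar G \to \Out(\Gamma) \to 1$. Because $L$ is elliptic, it sits up to conjugacy in a fixed ball $B_{\bar A}(10\delta)$ (Lemma~\ref{lem:elliptic}), so every subsequent small-cancellation step is automatically injective on $L$ once $R$ is large enough. No genericity, no stagewise protection, and no small-cancellation tool beyond Theorem~\ref{thm:sc} is needed.

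By contrast, your plan needs a small-cancellation theorem that imposes prescribed relators---Theorem~\ref{thm:sc} only pushes a finite set into a suitable subgroup; it never sets a given element equal to $1$---and even granting such a tool, your injectivity claim for $L \to \Gamma/N$ is a hand-wave. The ``classical SQ-universality argument'' does not obviously survive the simplicity steps you interleave: those steps kill relations drawn from the normal closure of arbitrary elements of $H_k$, not from a controlled family with small cancellation against the words in the $\hat l_i$, so nothing you have written prevents some nontrivial $w(l_i)$ from dying in the limit. The free-product trick is precisely what makes the paper's proof short; it replaces both of your obstacles with the single observation that $L$ is bounded in the relevant Cayley graph.
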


As a corollary, we obtain the following generalization of Theorem \ref{intro:thm:answer}.

\begin{corollary}
\label{cor:answer}
    Let $\Gamma$ be a torsion-free non-elementary hyperbolic group. For every set of primes $\mathcal{P}$, there exist continuum many, pairwise non-isomorphic, infinite, $2$-generated, simple characteristic quotients $\Gamma/N$ of $\Gamma$ such that $\tau(\Gamma/N) = \mathcal{P}$. In particular, $\Gamma$ admits continuum many torsion-free, characteristic, simple quotients.
\end{corollary}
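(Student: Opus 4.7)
The plan is to deduce the corollary from Theorem \ref{thm:answer} by feeding it a continuum family of countable groups with the prescribed torsion, then separating the resulting quotients up to isomorphism with a cardinality argument.

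By a classical theorem of B.~H. Neumann, there exist continuum many pairwise non-isomorphic finitely generated torsion-free groups $\{T_\alpha\}_{\alpha \in 2^{\mathbb{N}}}$. Given a set of primes $\mathcal{P}$, I would form the countable groups $L_\alpha := T_\alpha \ast F_\mathcal{P}$, where $F_\mathcal{P} := \ast_{p \in \mathcal{P}} \mathbb{Z}/p\mathbb{Z}$ (understood to be trivial if $\mathcal{P} = \emptyset$). Since the torsion of a free product is the union of the torsions of its factors, $\tau(L_\alpha) = \{1\} \cup \mathcal{P}$. Applying Theorem \ref{thm:answer} to each $L_\alpha$ would then produce a characteristic subgroup $N_\alpha < \Gamma$ such that $\Gamma/N_\alpha$ is simple, contains $L_\alpha$ (and in particular $T_\alpha$), and satisfies $\tau(\Gamma/N_\alpha) = \mathcal{P}$.

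The final step is a pigeonhole argument: each $\Gamma/N_\alpha$ is countable and hence admits only countably many isomorphism classes of finitely generated subgroups, since any such subgroup is determined up to isomorphism by a finite tuple of elements. Therefore any fixed isomorphism type of $\Gamma/N_\alpha$ can accommodate only countably many of the pairwise non-isomorphic $T_\alpha$'s. Consequently the fibers of $\alpha \mapsto [\Gamma/N_\alpha]$ have cardinality at most $\aleph_0$, which forces the image to have cardinality $2^{\aleph_0}$, delivering the desired continuum family. Setting $\mathcal{P} = \emptyset$ gives $L_\alpha = T_\alpha$ torsion-free, which yields the final assertion.

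The only real subtlety is cardinal-theoretic: one must distinguish the quotients using continuum many \emph{finitely generated} subgroups $T_\alpha$, because a countable group in general has continuum many subgroups but only countably many finitely generated ones up to isomorphism. Everything else is a direct application of Theorem \ref{thm:answer} and the elementary fact about torsion in free products.
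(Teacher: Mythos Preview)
Your proof is correct and essentially identical to the paper's: the authors also take the free product of $\ast_{p \in \mathcal{P}} \mathbb{Z}/p\mathbb{Z}$ with a continuum family of pairwise non-isomorphic finitely generated torsion-free groups (they cite Camm's simple amalgamated products rather than Neumann's groups), apply Theorem~\ref{thm:answer}, and finish with the same countably-many-finitely-generated-subgroups pigeonhole you describe.
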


\begin{proof}
	Let $E$ be a countable group such that $\tau(E) = \mathcal{P}$, for example $E \coloneqq \ast_{p \in \mathcal{P}} \mathbb{Z}/p\mathbb{Z}$.
    Let $\{ L_i \}_{i \in I}$ be a family of continuum many pairwise non-isomorphic finitely generated torsion-free groups, such as Camm's simple amalgamated products \cite{camm}. 
    Then by Theorem \ref{thm:answer}, for each $i \in I$ there exists a characteristic subgroup $N_i$ of $\Gamma$ such that $\Gamma / N_i$ is $2$-generated, infinite and simple; moreover $E \ast L_i$ embeds in $\Gamma / N_i$ and $\tau(\Gamma / N) = \tau(E \ast L_i) = \tau(E) = \mathcal{P}$.
    
    Now $\Gamma / N_i$ is a countable group containing the finitely generated subgroup $L_i$. 
    Since every countable group contains only countably many finitely generated subgroups, it follows that the family $\Gamma / N_i$ achieves continuum many isomorphism types.
\end{proof}

\begin{remark}
    We could also more directly obtain continuum many pairwise non-isomorphic groups by varying $\mathcal{P}$ among all infinite sets of primes.
\end{remark}

Before proving Theorem~\ref{thm:answer}, we start with the following auxiliary result.

\begin{proposition}
\label{res: quotient with prescribed torsion}
	Let $G$ be a group and $A \subset G$ a generating set of $G$.
	Assume that $\Gamma(G,A)$ is hyperbolic and the action of $G$ on this space is acylindrical and non-elementary.
    Let $H < G$ be a subgroup that is suitable with respect to its action on $\Gamma(G,A)$. 
    Let $L$ be a countable group and $R \in \mathbb R_+$.
    There is a quotient $\pi : G \to \bar G$  such that if we denote $\bar H \coloneqq \pi(H)$, and $N \coloneqq \ker(\pi)$, then the following hold.
    \begin{itemize}
        \item There is a subset $\bar A \subset \bar G$ containing $\pi(A)$ such that  $\Gamma(\bar G, \bar A)$ is hyperbolic and the action of $\bar G$ on this space is non-elementary and acylindrical. Moreover $K(\bar G) = 1$.
        \item $\pi : G \to \bar G$ is injective on $B_A(R)$.
        \item $L$ embeds as a subgroup of $\bar H$, and its image is elliptic for the action on $\Gamma(\bar G, \bar A)$.
        \item If in addition, $H$ is normal in $G$, then $N < H$ and $\tau(\bar H) = \tau(L) \cup \tau(H)$.
    \end{itemize}		        
\end{proposition}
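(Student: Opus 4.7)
The plan is to realize $L$ as an elliptic subgroup of an acylindrically hyperbolic extension of $G$, and then apply the Small Cancellation Theorem (Theorem~\ref{thm:sc}) to identify this copy of $L$ with a subgroup of the image of $H$. Since Theorem~\ref{thm:sc} only handles finite subsets $B$, one must first replace $L$ by a finitely generated ``container''.

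\textbf{Construction.} First, embed $L$ into a finitely generated group $L'$ with $\tau(L') = \tau(L)$ via the classical Higman--Neumann--Neumann construction (iterated HNN extensions preserve the torsion spectrum). Form the free product $G' \coloneqq G * L'$ with generating set $A' \coloneqq A \cup L'$. Standard combination techniques (viewing $G'$ as acting on a tree of spaces whose $G$-vertex spaces are copies of $\Gamma(G,A)$ and whose $L'$-vertex spaces are points) imply that $\Gamma(G',A')$ is hyperbolic and the $G'$-action on it is non-elementary and acylindrical, with $L' \subset B_{A'}(1)$ elliptic; moreover $H$ remains suitable, since it acts non-elementarily on one copy of $\Gamma(G,A)$ and cannot normalize a non-trivial finite subgroup of $G'$ (such a subgroup is $G'$-conjugate into $G$ or $L'$, and standard free-product arguments combined with suitability of $H$ in $G$ force it to be trivial). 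Now apply Theorem~\ref{thm:sc} to $G'$, with generating set $A'$, suitable subgroup $H$, finite subset $B$ a finite generating set of $L'$, and radius $\max(R,1)$. This yields a quotient $\tilde\pi \colon G' \to \bar G$ with the required acylindrical hyperbolic structure on $\Gamma(\bar G, \bar A)$ for some $\bar A \supset \tilde\pi(A')$, with $K(\bar G) = 1$, with $\tilde\pi$ injective on $B_{A'}(R) \supset L'$, with $\tilde\pi(B) \subset \tilde\pi(H) = \bar H$ (hence $\tilde\pi(L') \subset \bar H$), and with $\ker\tilde\pi$ the normal closure in $G'$ of finitely many elements of $BH$. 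Setting $\pi \coloneqq \tilde\pi|_G$, the map $\pi \colon G \to \bar G$ is surjective because $\bar G$ is generated by $\tilde\pi(A) \cup \tilde\pi(L') \subset \pi(G) \cup \pi(H) = \pi(G)$, and $L \hookrightarrow L' \hookrightarrow \bar H$ with elliptic image contained in $B_{\bar A}(1)$.

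\textbf{The normal case.} Assume further that $H$ is normal in $G$. The extra relations defining $\bar G$ are of the form $bh = 1$ with $b \in B \subset L'$ and $h \in H$; therefore $\bar G / \normal{\bar H} \cong G/H = Q$, since collapsing $\bar H$ also kills $\tilde\pi(L')$ via these relations, reducing the presentation to that of $Q$. Consequently, any $g \in G$ with $\pi(g) = 1$ has trivial image in $Q$, so $g \in H$; this yields $N \subset H$. For the torsion equality, Theorem~\ref{thm:sc} gives $\tau(\bar G) \subset \tau(G') = \tau(G) \cup \tau(L')$ (torsion in a free product is subconjugate into a factor). Since $\bar H$ is normal in $\bar G$, any torsion element of $\bar H$ is conjugate in $\bar G$ to $\tilde\pi(y)$ for some $y \in G \cup L'$, and in the first case $y$'s trivial image in $Q$ forces $y \in H$. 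Combined with the embedding $L \hookrightarrow \bar H$ and injectivity of $\pi$ on a ball containing all finite subgroups of $H$ up to conjugacy (via Lemma~\ref{lem:elliptic}, after enlarging $R$ if needed), this yields $\tau(\bar H) = \tau(H) \cup \tau(L)$.

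\textbf{Main obstacle.} The principal technical point is equipping $G' = G * L'$ with an acylindrical non-elementary hyperbolic action in which $L'$ is elliptic and $H$ remains suitable. While this is known via combination-type results for acylindrical actions on graphs of spaces, it requires input beyond what is explicitly developed in the excerpt; the rest of the argument is then a fairly direct bookkeeping exercise using Theorem~\ref{thm:sc}.
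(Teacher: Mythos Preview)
Your proposal is correct and follows essentially the same route as the paper: embed $L$ into a finitely generated group preserving torsion, form the free product with $G$, and apply Theorem~\ref{thm:sc} with $B$ a finite generating set of that factor. The one place you hesitate---the acylindrical non-elementary hyperbolicity of $\Gamma(G*L',A')$ with $L'$ elliptic and $H$ suitable---is dispatched in the paper by observing that $G*L'$ is hyperbolic relative to $\{G,L'\}$ and invoking \cite[Theorem~2.7]{Abbott:2019aa}, together with the $G$-equivariant isometric embedding $\Gamma(G,A)\hookrightarrow\Gamma(G*L',A')$.
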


\begin{proof}
	We start by noticing that, since every countable group embeds in a finitely generated group preserving the values of torsion \cite[Theorem~IV.3.1]{Lyndon:2001wm}, we may reduce to the case in which $L$ is finitely generated.
    Let $P$ denote the free product $G \ast L$.
	The set $A' \coloneqq A \cup L$ generates $P$.
	Since $P$ is hyperbolic relative to $\{G,L\}$, it is known that $\Gamma(P,A')$ is $\delta$-hyperbolic (for some $\delta \in \mathbb R_+$) and the action of $P$ on this space is non-elementary and acylindrical \cite[Theorem 2.7]{Abbott:2019aa}.
	On the one hand, $H$ does not normalize any non-trivial finite subgroup of $P$.
	On the other hand, the graph $\Gamma(G,A)$ admits a $G$-equivariant isometric embedding in $\Gamma(P,A')$.
	Hence, the action of $H$ on $\Gamma(P, A')$ is also non-elementary, and therefore the subgroup $H$ is suitable for the action of $P$ on $\Gamma(P,A')$.	

	Without loss of generality we can assume that $R \geq \max\{ 1, 10\delta\}$.
	Fix a finite generating set $B$ of $L$.
	Note that by construction $L$ is contained in the ball of radius $1$ of $\Gamma(P, A')$.
	Now we apply Theorem~\ref{thm:sc} to $P$, and obtain a quotient $\pi : P \to \bar{P}$ with the following properties.
	Denote by $\bar B$, $\bar L$, $\bar H$, and $\bar G$ the respective images of $B$, $L$, $H$ and $G$ under $\pi$.
	\begin{itemize}
		\item There is a subset $\bar A$ of $\bar P$ containing the image of $A'$ such $\Gamma(\bar P, \bar A)$ is hyperbolic.
		The action of $\bar P$ on this space is non-elementary and acylindrical.
		Moreover $K(\bar P) = \{1\}$.
		\item $\bar B$ is contained in $\bar H$, hence $\bar L < \bar H < \bar G$.
		\item $\pi$ is injective on $B_{A'}(R) \subset P$, hence $\pi|_G$ is injective on $B_A(R) \subset G$.
		\item Every element of order $n$ in $\bar{P}$ is the image of an element of order $n$ in $P$.
		\item $\ker(\pi)$ is contained in the normal closure of $L\cup H$, which we denote by $\normal{L\cup H}$.
	\end{itemize}
	Let $N \coloneqq G \cap \ker(\pi)$.
	Since $\bar L < \bar G$, the restriction $\pi : G \to \bar P$ is surjective, hence $\bar P = \bar G = G/N$ is a quotient of $G$.
	Note that the diameter of $L$ in $\Gamma(P,A')$ is $1$.
	Hence $L$ embeds in $\bar P$.
	By construction the map $\Gamma(P, A') \to \Gamma(\bar P, \bar A)$ is $1$-Lispchitz, hence $\bar L$ is elliptic for its action $\Gamma(\bar P, \bar A)$.
	
	Assume now that $H$ is normal in $G$.
	Hence the quotient $P / \normal{ L \cup H}$ is exactly $G/H$.
	This means that $G \cap \normal{L \cup H} = H$.
    However $\ker(\pi)$ is contained in $\normal{L \cup H}$, thus $N$ is subgroup of $H$.
	Since $L$ embeds in $\bar H$, we have $\tau(L) \subset \tau(\bar H)$.
	It is proven as in Theorem~\ref{thm:general} that every finite subgroup of $P$ embeds in $\bar{P}$, hence $\tau(H) \subset \tau(\bar H)$.
	Let us prove now the converse inclusion.
	Consider an element $n \in \tau(\bar H)$.
	Let $\bar h$ be an element of $\bar H$ with order $n$.
	We know that $\bar h$ admits a pre-image $h \in P$ with the same order.
	Since $P$ is a free product, either $h$ is conjugate into $L$, in which case $n \in \tau(L)$, or $h$ is conjugate into $G$. 
	Recall that $H$ is normal in $G$, hence so is $\bar H$ in $\bar G$.
	Up to replacing $\bar h$ by a conjugate, we can assume that $h$ actually belongs to $G$, thus to $HN$.
	However the kernel $N$ of the map $G\to \bar G$ is contained in $H$, hence $h$ belongs to $H$.
	This shows that $n \in \tau(H)$.
\end{proof}

\begin{proof}[Proof of Theorem~\ref{thm:answer}]
	Since $\Gamma$ is a non-elementary hyperbolic group, $\Aut(\Gamma)$ is acylindrically hyperbolic (Example \ref{ex:list}), and since $\Gamma$ is torsion-free, $Z(\Gamma) = \{ 1 \}$, and in particular $\Inn(\Gamma) \cong \Gamma$ is infinite. We claim that $\Aut(\Gamma)$ has no non-trivial finite normal subgroup. Indeed, let $K < \Aut(\Gamma)$ be a finite normal subgroup. Then $\Gamma$ acts on $K$ by conjugacy, and thus there exists a finite index subgroup $\Gamma' < \Gamma$ such that the action of $\Gamma'$ on $K$ fixes every point. Therefore every automorphism in $K$ fixes $\Gamma'$ pointwise. But since $\Gamma$ is torsion-free and hyperbolic, this implies that $K$ is trivial \cite[Lemmas 2.2 and 2.3]{root}.
	
	For simplicity of reference, we now let $H \coloneqq \Gamma$, which we identify with $\Inn(\Gamma)$, $G \coloneqq \Aut(\Gamma)$, and $Q \coloneqq \Out(\Gamma)$.
	Let $L$ be a countable group.
	Let $A$ be generating set of $G$ such that $\Gamma(G,A)$ is hyperbolic and the action of $G$ on this space is acylindrical and non-elementary (Theorem \ref{thm:cayley}).
	Since $H$ is an infinite normal subgroup of $G$, it is non-elementary as well (Lemma \ref{lem:normal}).
	According to Proposition~\ref{res: quotient with prescribed torsion}, there is a normal subgroup $N_0$ of $G$ contained in $H$ such that $\bar G \coloneqq G/N_0$ is acylindrically hyperbolic.
	More precisely there is a subset $\bar A$ generating $\bar G$ and containing the image of $A$ such that $\Gamma(\bar G, \bar A)$ hyperbolic and the action of $\bar G$ on this space is non-elementary and acylindrical.
	In addition,  $K(\bar G) = 1$, $C$ embeds in $\bar H$, $L$ embeds in $\bar H$ as an elliptic subgroup, and $\tau(H/N_0) =  \tau(L)$.
	Let $\bar H \coloneqq H/N_0$.
	Since $K(\bar G) = 1$, the quotient $\bar H$ is an infinite normal subgroup of $\bar G$, so that the short exact sequence
	\begin{equation*}
		1 \to \bar H \to \bar G \to Q \to 1
	\end{equation*}
	satisfies the assumption of Theorem~\ref{thm:general}.
	Consequently there is a normal subgroup $N_1 \subset \bar G$ contained in $\bar H$ such that $\bar H/N_1$ is simple, $2$-generated but not finitely presented, with $\tau(\bar H/N_1) = \tau(H/N_0)$.
	Moreover $C$ and $L$ embed in $\bar H/N_1$ and $Q$ embeds in $\Out(\bar H/N_1)$.
	We write $N$ for the pre-image of $N_1$ in $G$,
	which is a normal subgroup of $G$ contained in $H$.
	We conclude by Lemma \ref{lem:char}.
\end{proof}

\begin{remark}
\label{rem:T}
	Using classical arguments in small cancellation theory, we could prescribe other properties on $\Gamma/N$.
	For instance, in the above proof, if we first replace $H$ by a common quotient of $H$ and an acylindrically hyperbolic group with Property (T) \cite[Corollary 1.6]{hull} we would obtain that $H/N$ has Property (T).
    Similarly, one may try to construct characteristic quotients of hyperbolic groups that are complete, finitely presented and Hopfian \cite[Theorem 10.4]{Lyndon:2001wm}, with a prescribed outer automorphism group \cite{rips:out}, and so on.
    However, it is unclear whether any of these would have interesting connections to independent problems, the same way that Wiegold's question \cite[Question 6.45]{kourovka} is connected to the problem of growth sequences of simple groups.
\end{remark}

\section{Quasi-isometric diversity}
\label{s:QI}

Our goal in this section is to strengthen the uncountability result (Corollary \ref{cor:answer}) to moreover distinguish groups up to quasi-isometry.

\begin{theorem}
\label{thm:QI}
    There exist continuum many, pairwise non-quasi-isometric, $2$-generated, infinite, simple, characteristic quotients of $F_n$.
\end{theorem}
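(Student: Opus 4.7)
The plan is to invoke a criterion for quasi-isometric diversity in the space $\mathcal{G}_n$ of $n$-marked groups, established in \cite{diversity}. Roughly, this criterion guarantees that a perfect subset of $\mathcal{G}_n$ whose elements are marked limits of finitely presented acylindrically hyperbolic marked groups, and which satisfies a suitable dynamical richness condition, already contains continuum many pairwise non-quasi-isometric groups. The strategy is therefore to produce a continuous injection of the Cantor space into the locus of infinite simple characteristic quotients of $F_n$ inside $\mathcal{G}_n$, in such a way that the hypotheses of this criterion are satisfied.

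The first step is to refine the construction underlying Theorem~\ref{thm:general} so that the infinite simple characteristic quotients of $F_n$ it produces arise as marked limits of finitely presented acylindrically hyperbolic characteristic quotients of $F_n$. Returning to the inductive small cancellation scheme applied to $G = \Aut(F_n)$ and $H = \Inn(F_n) \cong F_n$, the Small Cancellation Theorem (Theorem~\ref{thm:sc}) already yields at each stage $k$ a quotient $\Aut(F_n)/N_k$ that is finitely presented and acylindrically hyperbolic, with $N_k$ characteristic in $F_n$. The additional requirement here is the finite presentability of $F_n/N_k$ itself; this can be arranged by selecting each successive relator so that its $F_n$-normal closure is already $\Aut(F_n)$-invariant (hence characteristic and finitely normally generated in $F_n$), using the abundance of elements with such property in the suitable subgroups at hand.

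The second step is to \emph{branch} this construction. At each stage the small cancellation theorem permits many independent choices of relator, since the suitable subgroup contains infinitely many pairwise independent loxodromic elements, and two sufficiently independent choices yield distinct small cancellation quotients. Making a binary decision at each step produces a full binary tree of nested characteristic normal subgroups indexed by $\{0,1\}^{\mathbb N}$. Each infinite branch $\omega$ defines an infinite simple characteristic quotient $F_n/N_\omega$, and with care the map $\omega \mapsto F_n/N_\omega$ becomes a continuous injection from the Cantor space into $\mathcal{G}_n$, whose image consists entirely of marked limits of finitely presented acylindrically hyperbolic characteristic quotients of $F_n$.

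The final step is to apply the quasi-isometric diversity criterion of \cite{diversity} to this Cantor subspace, extracting continuum many pairwise non-quasi-isometric groups among the $F_n/N_\omega$. The main obstacle is verifying the richness hypothesis of the criterion, which demands quantitative control on the branching: at each node of the tree, one must ensure that the two choices diverge geometrically by a prescribably large amount (for example by choosing each new small cancellation relator word to be very long relative to the scale set by all previous data) so that no quasi-isometry can identify two distinct infinite branches. Reconciling this quantitative flexibility with the characteristic constraint and the finite-presentability requirement at each intermediate level is expected to be the technical heart of the argument.
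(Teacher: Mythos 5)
There is a genuine gap, concentrated in two places.

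First, you misstate the criterion from \cite{diversity}. Theorem \ref{thm:diversity:criterion} (\cite[Corollary 1.2]{diversity}) asks only for a non-empty perfect subset of $\Gn$ that contains a \emph{dense subset of finitely presented groups}; the conclusion that every comeager subset has continuum many quasi-isometry classes then comes for free. There is no ``dynamical richness'' hypothesis, and in particular no need to choose successive relators so that two branches diverge geometrically by a quantified amount. Your entire final step, which you correctly identify as the ``technical heart'' of your argument, is devoted to a non-existent hypothesis, while the hypotheses that \emph{do} need to be verified are left unaddressed. The essential point the criterion buys you is precisely that you never have to distinguish groups up to quasi-isometry by hand.

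Second, and more fatally, the perfect set you propose to build cannot satisfy the criterion's actual hypothesis. Your continuous injection of Cantor space lands ``in the locus of infinite simple characteristic quotients,'' i.e.\ every point of its image is one of the simple groups $F_n/N_\omega$. But these groups are never finitely presented (Theorem \ref{thm:general} shows they are not even finitely presentable), so this Cantor set contains \emph{no} finitely presented groups, let alone a dense set of them. The fix is to work not with the simple quotients themselves but with the closure of the family $\mathcal H$ of finitely presented, acylindrically hyperbolic intermediate approximants; the simple quotients then sit inside $\cl(\mathcal H)$ as a comeager $G_\delta$-subset $\mathcal S$, disjoint from the dense set $\mathcal H$, and it is this disjointness that makes $\cl(\mathcal H)$ perfect without any explicit branching. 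Relatedly, your mechanism for ensuring each $F_n/N_k$ is finitely presented---choosing relators whose $F_n$-normal closure happens to be $\Aut(F_n)$-invariant---is asserted without justification and is not obviously compatible with the small cancellation setup. The paper instead first produces (Lemma \ref{lem:fp}) a single finitely presented acylindrically hyperbolic quotient $G_0$ of $\Aut(F_n)$ onto which $F_n$ \emph{surjects}; from $G_0$ on, the kernel of $F_n$ and the kernel of $\Aut(F_n)$ normally generate the same subgroups, so every further quotient is automatically both characteristic and finitely presented, with no ad hoc choice of relators required.
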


We will prove Theorem \ref{thm:QI} by applying a criterion of Minasyan--Osin--Witzel \cite{diversity}. This requires a small amount on background on the \emph{space of marked groups} $\Gn$ ($n \geq 2$) \cite{grigorchuk}. This is the set of normal subgroups of $F_n$, seen as a subspace of $2^{F_n}$ with the product topology; in particular it is a compact metrizable space. We will often identify a normal subgroup $N < F_n$ with the corresponding marked quotient $F_n \to F_n / N$. We will use some standard terminology from descriptive set theory \cite[I.8]{kechris}: A space is \emph{perfect} if it is closed and has no isolated points; a countable intersection of open sets is called a \emph{$G_\delta$-set}. In our settings, a set is  \emph{comeager} if contains a dense $G_\delta$-set (this relies on the Baire Category Theorem).

Following \cite{diversity}, we say that a subspace of $\Gn$ is \emph{quasi-isometrically diverse} if every comeager subset of it has continuum many quasi-isometry classes.
The proof of Theorem \ref{thm:QI} will use the following criterion for quasi-isometric diversity:

\begin{theorem}[{\cite[Corollary 1.2]{diversity}}]
\label{thm:diversity:criterion}
    Every non-empty perfect subset of $\Gn$ containing a dense subset of finitely presented groups is quasi-isometrically diverse.
\end{theorem}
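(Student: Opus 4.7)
The plan is to deduce the theorem from a Mycielski-type perfect set theorem for analytic equivalence relations: if every class of the quasi-isometry relation on a perfect Polish space is meager, then every comeager subset of that space contains a Cantor set of pairwise inequivalent elements. Let $\mathcal{F} \subseteq \mathcal{X}$ denote the hypothesised dense subset of finitely presented marked groups, and let $\mathcal{C} \subseteq \mathcal{X}$ be a given comeager subset. Note first that $\mathcal{X}$, being a perfect closed subset of the Polish space $\Gn$, is itself a perfect Polish space, so all standard Baire category machinery applies inside it.

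The crux is to show that every quasi-isometry class in $\mathcal{X}$ is meager. There are only countably many finitely presented groups up to isomorphism, and each one admits only countably many $n$-generated markings, so the set of finitely presented marked groups in $\mathcal{X}$ is countable and hence meager in the perfect Polish space $\mathcal{X}$. The relation $\sim_{\text{QI}}$ is analytic (it quantifies over functions between countable sets), so all of its classes have the Baire property. Suppose for contradiction that $[G]_{\text{QI}} \cap \mathcal{X}$ is non-meager for some $G$; by the Baire property it is comeager in some non-empty open $U \subseteq \mathcal{X}$. Combining this with the density of $\mathcal{F}$ in $U$ via a Baire category argument would produce a finitely presented representative $K \in [G]_{\text{QI}} \cap U$. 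Since finite presentability is a quasi-isometry invariant, $G$ itself would be finitely presented, so $[G]_{\text{QI}} \cap \mathcal{X}$ would sit inside the meager set of finitely presented marked groups, a contradiction. Once every QI class is meager, Kuratowski--Ulam makes $\sim_{\text{QI}}$ meager as a subset of $\mathcal{X} \times \mathcal{X}$, and Mycielski's theorem then delivers a Cantor set of pairwise $\sim_{\text{QI}}$-inequivalent elements inside $\mathcal{C}$, yielding continuum many QI classes there.

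The main obstacle is the bridge step above: a priori, a comeager subset of an open $U$ need not intersect the dense but countable (hence meager) subset $\mathcal{F} \cap U$, so the existence of a finitely presented representative in $[G]_{\text{QI}} \cap U$ requires more than just density. The argument of \cite{diversity} circumvents this by replacing the existential statement with a direct Cantor-scheme construction: one builds open sets $(U_s)_{s \in 2^{<\omega}}$ in $\mathcal{X}$, each meeting $\mathcal{C}$ (possible since $\mathcal{C}$ is comeager in $\mathcal{X}$ and each $U_s$ is non-empty open), picks finitely presented anchors $H_s \in \mathcal{F} \cap U_s$ at every node $s$, and engineers the choices so that the limit groups $G_\sigma \in \bigcap_n U_{\sigma|n} \cap \mathcal{C}$ along different infinite branches $\sigma \in 2^\omega$ are separated by quasi-isometry invariants that pass to marked-group limits (such as growth type, asymptotic dimension, or coarse geometric features of the approximating finitely presented anchors). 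Designing the tree so that continuum many branches yield pairwise non-QI limits inside $\mathcal{C}$ is the technical heart of the criterion.
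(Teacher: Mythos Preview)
The paper does not supply a proof of this statement: it is quoted as \cite[Corollary~1.2]{diversity} and used as a black box. The only argument in the paper is the Remark immediately following the theorem, which notes that a perfect subset of the compact space $\Gn$ is also perfect in the larger space $\mathcal{G}$ treated in \cite{diversity}, so the $\Gn$ version follows at once from the cited result. There is therefore nothing substantive in this paper to compare your attempt against.

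Regarding the attempt itself: your first paragraph outlines the natural strategy (show every QI class is meager in $\mathcal{X}$, then apply a Mycielski-type theorem), but the step you flag is a genuine gap. Comeagerness of $[G]_{\mathrm{QI}}$ in an open $U$ together with mere density of the countable set $\mathcal{F}$ in $U$ does not yield an element of $[G]_{\mathrm{QI}}\cap\mathcal{F}$; two disjoint dense sets coexist happily in a perfect space. You have correctly located where the real work lies, but you have not done it.

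Your final paragraph does not fill the gap either: it is a description of a hoped-for construction rather than an argument. You give no mechanism forcing distinct branches of the Cantor scheme to land in distinct QI classes, and the specific invariants you name (growth type, asymptotic dimension) are neither shown to vary along the scheme nor plausibly controllable in an \emph{arbitrary} perfect subset of $\Gn$ with dense finitely presented points. As written, neither paragraph constitutes a proof; if you want to actually establish the criterion you should consult \cite{diversity} directly.
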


\begin{remark}
    The statement of \cite[Corollary 1.2]{diversity} is about non-empty perfect subsets of $\mathcal{G}$, which is a directed union of the spaces $\Gn$ for $n \geq 2$. Since $\Gn$ is compact, a perfect subset of $\Gn$ is also a perfect subset of $\mathcal{G}$, thus Theorem \ref{thm:diversity:criterion} is a direct consequence of \cite[Corollary 1.2]{diversity}.
\end{remark}

We will apply Theorem \ref{thm:diversity:criterion} as in the proof of \cite[Corollary 4.10]{diversity}. Namely, we will exhibit a set $\mathcal{H}$ of finitely presented marked groups, and a set $\mathcal{S}$ of simple characteristic marked groups, such that $\mathcal{S}$ is comeager in the closure $\cl(\mathcal{H})$. 
Note that the groups that appear in intermediate steps of our construction (Theorem \ref{thm:general}) are not necessarily finitely presented, unless $G = H$. 

To bypass this difficulty we use Corollary \ref{lem:fp}: There exists a $2$-generated, \emph{finitely presented}, acylindrically hyperbolic quotient $G_0$ of $\Aut(F_n)$ such that the composition $F_n \to \Aut(F_n) \to G_0$ is surjective. 
We fix the marked group $G_0$ for the rest of this section. Let $\mathcal{H} \subset \Gn$ denote the set of marked groups that are finitely presented, acylindrically hyperbolic, and factor through $F_n \to G_0$. 
We further let $\mathcal{S}$ denote the subset of $\cl(\mathcal{H})$ consisting of simple groups whose marking factors through $F_n \to G_0$. 
Since $G_0$ is $2$-generated, so is every element in $\mathcal S$, provided it is infinite.
We make the following key observation:

\begin{lemma}
\label{lem:factor:char}
    Every quotient of $G_0$ is a characteristic quotient of $F_n$. In particular, every marked group in $\mathcal{S}$ is a simple characteristic quotient of $F_n$.
\end{lemma}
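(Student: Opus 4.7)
The plan is to reduce everything to Lemma \ref{lem:char} via the construction of $G_0$. By Lemma \ref{lem:fp}, $G_0$ is obtained as a quotient $\pi \colon \Aut(F_n) \to G_0$ for which the composition $\pi \circ \ad \colon F_n \to G_0$ is surjective, and this composition is precisely the marking of $G_0$ as an element of $\Gn$. Consequently, if $q \colon G_0 \to H$ is any quotient map, then the marking of $H$ (inherited from $G_0$) is the composition
\[
F_n \xrightarrow{\ad} \Aut(F_n) \xrightarrow{\pi} G_0 \xrightarrow{q} H.
\]

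I would then apply Lemma \ref{lem:char} to the homomorphism $q \circ \pi \colon \Aut(F_n) \to H$: it asserts exactly that the kernel of $q \circ \pi \circ \ad \colon F_n \to H$ is a characteristic subgroup of $F_n$. Since this kernel is by the previous paragraph the kernel of the marking $F_n \to H$, the quotient $H$ of $F_n$ is characteristic. This proves the first assertion.

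For the ``in particular'' statement, let $H \in \mathcal{S}$. By the definition of $\mathcal{S}$, the marking $F_n \to H$ factors through $F_n \to G_0$, which is to say $H$ is a (marked) quotient of $G_0$; hence the first part gives that $H$ is a characteristic quotient of $F_n$, and by definition of $\mathcal{S}$ it is also simple.

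There is essentially no obstacle here: the point of Lemma \ref{lem:fp} was precisely to set things up so that this corollary becomes a one-line application of Lemma \ref{lem:char}. The only subtlety is to observe that the marking of $G_0$ factors through $\ad$, so that Lemma \ref{lem:char} is directly applicable to any further quotient of $G_0$.
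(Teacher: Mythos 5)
Your proof is correct and follows essentially the same route as the paper's: both use Lemma \ref{lem:fp} to factor the marking of a quotient of $G_0$ through $F_n \to \Aut(F_n) \to G_0$, and then invoke Lemma \ref{lem:char} to conclude that the kernel is characteristic. Your writeup is simply more explicit about identifying $F_n \to \Aut(F_n)$ with $\ad$ and about what ``factors through $G_0$'' means for a marked group, which is a fair elaboration but not a different argument.
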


\begin{proof}
    By the choice of $G_0$ (Corollary \ref{lem:fp}), a quotient $F_n \to G_0 \to H$ factors as $F_n \to \Aut(F_n) \to G_0 \to H$. The kernel of this composition is characteristic, by Lemma \ref{lem:char}.
\end{proof}

Finally, we need to establish a topological property of $\mathcal{S}$.

\begin{lemma}
\label{lem:comeager}
    The subset $\mathcal{S} \subset \cl(\mathcal{H})$ is comeager.
\end{lemma}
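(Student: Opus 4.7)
The plan is to apply the Baire category theorem: I will write $\mathcal{S}$ as the intersection of $\cl(\mathcal{H})$ with a dense $G_\delta$ subset of $\Gn$. Since the condition of factoring through $F_n \to G_0$ is closed and already holds on all of $\cl(\mathcal{H})$, $\mathcal{S}$ coincides with the set of simple markings in $\cl(\mathcal{H})$; it therefore suffices to show (a) simplicity is a $G_\delta$ condition on $\Gn$, and (b) simple markings are dense in $\cl(\mathcal{H})$.

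Step (a) is a routine descriptive set theoretic calculation. For each $w \in F_n \setminus \{1\}$, let $V_w$ be the set of markings $N \in \Gn$ such that either $w \in N$, a clopen condition, or each free generator $x_j$ of $F_n$ lies in $\normal{w, N}$. The latter unfolds as a finite intersection (over $j$) of countable unions (over finite products $P$ of conjugates of $w^{\pm 1}$) of the clopen conditions $x_j P^{-1} \in N$, and is therefore open. Hence each $V_w$ is open; the set of markings giving a simple or trivial quotient is $\bigcap_w V_w$, and intersecting with the open set $\bigcup_j \{N : x_j \notin N\}$ excludes the trivial group while preserving the $G_\delta$ property.

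For step (b), the idea is to iterate the construction underlying Theorem \ref{thm:general}. Since $\mathcal{H}$ is dense in $\cl(\mathcal{H})$, it suffices to approximate each $N_\star \in \mathcal{H}$ inside an arbitrary basic open neighborhood $U$ determined by agreement on a finite ball $B \subset F_n$. Set $G \coloneqq F_n/N_\star$, a finitely presented acylindrically hyperbolic group, and let $A$ be a generating set as in Theorem \ref{thm:cayley}; choose $R$ large enough that the image of $B$ in $G$ is contained in $B_A(R)$. I apply the construction in the proof of Theorem \ref{thm:general} with $H = G$ (so that $Q$ is trivial) and parameter $R$. This yields a simple quotient $F_n/N$ realized as an increasing union $N = \bigcup_k \tilde N_k$, where each $\tilde N_{k+1}$ is obtained from $\tilde N_k$ by adjoining the normal closure of finitely many elements via Theorem \ref{thm:sc}. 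A straightforward induction shows that every intermediate quotient $F_n/\tilde N_k$ is finitely presented, acylindrically hyperbolic and factors through $G_0$, so $\tilde N_k \in \mathcal{H}$ and $N = \lim_k \tilde N_k \in \cl(\mathcal{H})$; the injectivity of $G \to F_n/N$ on $B_A(R)$ forces $N \cap B = N_\star \cap B$, placing $N$ in $\mathcal{S} \cap U$.

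The main subtlety is that Theorem \ref{thm:general} requires $K(F_n/N_\star) = \{1\}$, a condition not spelled out in the stated definition of $\mathcal{H}$. This is nonetheless automatic for $G_0$ (by the proof of Lemma \ref{lem:fp}, the image of $F_n$ is itself a suitable subgroup of $G_0$) and is preserved throughout the iterative construction above, so the cleanest fix is to include it in the definition of $\mathcal{H}$ from the outset; alternatively, one may perform a preliminary small cancellation step killing the finite radical, chosen to be close enough to $N_\star$ that the remainder of the argument still goes through.
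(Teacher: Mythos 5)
Your proof is correct and follows essentially the same route as the paper: verify that $\mathcal{S}$ is a $G_\delta$, then prove density by applying the construction of Theorem~\ref{thm:general} with $H = G$ and noting that all the intermediate quotients $F_n/\tilde N_k$ lie in $\mathcal{H}$. The only substantive difference in the first half is that the paper cites \cite[Corollary~4.5]{diversity} for the fact that simple markings form a $G_\delta$ in $\Gn$, whereas you reprove it via the open sets $V_w$; both are fine, yours merely makes the step self-contained.

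Your remark about $K(\cdot) = \{1\}$ is a good catch: as literally stated, the definition of $\mathcal{H}$ (finitely presented, acylindrically hyperbolic, factoring through $G_0$) does not by itself guarantee $K(H) = \{1\}$, which is a hypothesis of Theorem~\ref{thm:general}; a quotient of a group with trivial finite radical can acquire a nontrivial finite normal subgroup. The paper's argument works because the groups it actually feeds into Theorem~\ref{thm:general} arise from iterating Theorem~\ref{thm:sc} starting at $G_0$, and Theorem~\ref{thm:sc} preserves the property (suitability of $\pi(H)$ in $\bar G$ forces $K(\bar G) = \{1\}$). Your proposed fix --- add $K(\cdot) = \{1\}$ to the definition of $\mathcal{H}$ from the start, noting it holds for $G_0$ and is propagated --- is the cleanest way to state it and changes nothing downstream.
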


\begin{proof}
    The set of marked groups that factor through $F_n \to G_0$ is open, because $G_0$ is finitely presented. Moreover, simple groups form a $G_\delta$-set in $\Gn$ \cite[Corollary 4.5]{diversity}. Therefore the set of simple quotients of $G_0$ is a $G_\delta$-set, which implies that $\mathcal{S}$ is a $G_\delta$-subset of $\cl(\mathcal{H})$.

    To show that $\mathcal{S}$ is dense in $\cl(\mathcal{H})$, it suffices to show that each $H \in \mathcal{H}$ is an accumulation point of $\mathcal{S}$. A neighbourhood basis of $H \coloneqq F_n / N_H$ consists of the sets
    \begin{equation*}
        U_W \coloneqq \{ N \in \Gn : N \cap W = N_H \cap W \}; 
    \end{equation*}
    where $W \subset F_n$ is finite. By Theorem \ref{thm:general} applied with $H = G$ (and $Q = 1$), there exists an infinite simple quotient $S$ of $H$ such that the finite set $W N_H \subset F_n / N_H = H$ maps injectively into $S$. Thus $S \in U_W$. Moreover, the construction of Theorem \ref{thm:general} produces a sequence of quotients of $H$, each of which is acylindrically hyperbolic, and obtained from the previous one by an application of Theorem \ref{thm:sc}. Since $H$ is finitely presented, this shows that each of the groups in this sequence belong to $\mathcal{H}$, and therefore $S \in \cl(\mathcal{H})$, and so $S \in \mathcal{S}$.
\end{proof}

We are ready to prove Theorem \ref{thm:QI}.

\begin{proof}[Proof of Theorem \ref{thm:QI}]
    Let $G_0$, $\mathcal{H}$, $\mathcal{S}$ be as above. Recall that $\mathcal{S}$ consists of simple characteristic quotients of $F_n$ by Lemma \ref{lem:factor:char}, and that $\mathcal{S} \subset \cl(\mathcal{H})$ is comeager, in particular it is dense.
    By definition $\mathcal{H}$ is also dense in $\cl(\mathcal{H})$, and moreover $\mathcal{H} \cap \mathcal{S} = \emptyset$, because acylindrically hyperbolic groups cannot be simple. Since an isolated point must belong to every dense subset, it follows that $\cl(\mathcal{H})$ has no isolated points, and therefore is perfect.
    Theorem \ref{thm:diversity:criterion} now applies to give quasi-isometric diversity of $\cl(\mathcal{H})$. Since $\mathcal{S}$ is comeager in $\cl(\mathcal{H})$, it follows from the definition that $\mathcal{S}$ has continuum many quasi-isometry classes. Finite groups form a single quasi-isometry class, so we conclude that there are continuum many quasi-isometry classes of $2$-generated, infinite, simple, characteristic quotients of $F_n$.
\end{proof}

\section{Outlook}

We end with two questions on possible generalizations and applications of our results. The first is about how far the list from Example \ref{ex:list} can be extended:

\begin{question}
\label{q:genevois}
    Let $\Gamma$ be a finitely generated acylindrically hyperbolic group. Does $\Gamma$ admit an infinite simple characteristic quotient?
\end{question}

In \cite[Question 1.1]{auto:oneend}, Genevois asked whether the automorphism group of a finitely generated acylindrically hyperbolic group is always acylindrically hyperbolic. In light of Corollary \ref{intro:cor:aut}, an affirmative answer to Genevois's Question implies an affirmative answer to Question \ref{q:genevois}.

\medskip

The second question asks for a middle ground between Lubotzky's Conjecture (Remark \ref{rem:finite}), which is about \emph{finite} simple characteristic quotients, and our construction, which produces \emph{non-finitely presentable} simple characteristic quotients.

\begin{question}
    Let $n \geq 2$. Does there exist a characteristic subgroup $N < F_n$ such that $F_n / N$ is infinite, simple and finitely presentable?
\end{question}

\footnotesize

\bibliographystyle{amsalpha}
\bibliography{ref}

\vspace{0.5cm}

\normalsize

\noindent{\textsc{CNRS, Universit{\'e} de Bourgogne, IMB - UMR 5584,  France}}

\noindent{\textit{E-mail address:} \texttt{remi.coulon@cnrs.fr}} \\

\noindent{\textsc{Department of Pure Mathematics and Mathematical Statistics, University of Cambridge, UK}}

\noindent{\textit{E-mail address:} \texttt{ff373@cam.ac.uk}}

\end{document}